\newtheorem{theorem}{Theorem}[section]
\newtheorem{lemma}[theorem]{Lemma}
\newtheorem{proposition}[theorem]{Proposition}
\newtheorem{remark}[theorem]{Remark}
\theoremstyle{corollary}
\newtheorem{corollary}[theorem]{Corollary}
\def\N{\mathbb N}
\def\C{\mathbb C}
\def\R{\mathbb R}
\def\D{\mathbb D}
\def\Q{\mathbb Q}
\def\ve{\varepsilon}
\def\al{\alpha}
\def\la{\lambda}
\def\ovl{\overline}
\date{}
\begin{document}

\title{Vector spaces of non-extendable holomorphic functions}

\author{Luis Bernal-Gonz\'alez}

\maketitle

{\footnotesize  
{\sl
\centerline{Departamento de An\'alisis Matem\'atico. Facultad de Matem\'aticas.}
\centerline{Apdo.~1160. Avda.~Reina Mercedes, 41080 Sevilla, Spain.}
\centerline{E-mail: {\tt lbernal@us.es}}
}}

\vskip 10pt

\centerline{\sl To Professor Jos\'e Bonet Solves on his 60th birthday}

\begin{abstract}
\noindent In this paper, the linear structure of the family $H_e(G)$ of holomorphic functions in a domain $G$ of the complex plane that are not
analytically continuable beyond the boundary of $G$ is analyzed. We prove that $H_e(G)$ contains, except for zero, a dense algebra; and, under appropriate conditions, the subfamily of $H_e(G)$ consisting of boundary-regular functions contains dense vector spaces with maximal dimension,
as well as infinite dimensional closed vector spaces and large algebras. The case in which $G$ is a domain of existence in a complex Banach space is also considered. The results obtained complete or extend a number of previous ones by several authors.

\vskip .15cm

\noindent {\sl 2010 Mathematics Subject Classification:} Primary 30B40. Secondary 15A03, 30H50, 32D05, 46G20.

\vskip .15cm

\noindent {\sl Key words and phrases:} Dense-lineability, spaceability, algebrability,
non-continuable holomorphic functions, domain of existence.
\end{abstract}

\section{Introduction}

\quad This paper intends to be a contribution to the study of the linear structure of the family of
non-extendable holomorphic functions. The search of linear (or, in general, algebraic) structures
within nonlinear sets has become a trend in the last two decades, see e.g.~the survey \cite{BPS}.
Here we restrict ourselves to the setting of complex analytic functions, with focus on those ones
that cannot be continued beyond the boundary of the domain.

\vskip .15cm

Although our main concern is the complex plane $\C$, it is convenient to state definitions in a more
general framework. Assume that $E$ is a complex Banach space. A {\it domain} $G$ in $E$ is a nonempty
connected open subset of $E$. Along this paper, we will assume that $G \ne E$.
Denote by $H(G)$ the space of all holomorphic functions $f:G \to \C$
(see e.g.~\cite{Chae} for definitions and pro\-per\-ties), and by $\partial G$ the boundary of $G$. We say
that a function $f \in H(G)$ is {\it holomorphically non-extendable across any boundary point}
(synonymous expressions are: $f$ is {\it analytically non-continuable beyond} $\partial G$, $f$ is
{\it holomorphic exactly} on $G$, $G$ is a {\it domain of existence} of $f$) whenever there do not exist
two domains $G_1$ and $G_2$ in $E$ and $\widetilde{f} \in H(G_1)$ such that $G_2 \subset G \cap G_1$,
$G_1 \not\subset G$ and $\widetilde{f} = f$ on $G_2$. We denote by $H_e(G)$ the family of all $f \in H(G)$
that are holomorphic exactly on $G$. A domain $G \subset E$ is said to be a {\it domain of existence} if
it is a domain of existence of some function $f \in H(G)$ (that is, if $H_e(G) \ne \emptyset$). And $G$
is called a {\it domain of holomorphy} \,provided that there do not exist
two domains $G_1$ and $G_2$ in $E$ with $G_2 \subset G \cap G_1$,
$G_1 \not\subset G$ such that, for every $f \in H(G)$, there exists $\widetilde{f} \in H(G_1)$ with $\widetilde{f} = f$ on $G_2$.

\vskip .15cm

Plainly, every domain of existence is a domain of holomorphy. In the case where
$E = \C^N$ $(N \in \N := \{1,2,...\})$, the Cartan--Thullen theorem \cite{Kaup} asserts that
$G$ is a domain of existence if and only if
$G$ is a domain of holomorphy, and if and only if
$G$ is holomorphically convex, that is, for every compact subset $K$ of $G$, the set $\widehat{K} :=
\{x \in G: \, |f(x)| \le \sup_K |f|$ for all $f \in H(G)\}$ satisfies dist$(\widehat{K},E \setminus G) > 0$.

\vskip .15cm

Turning to the case $E = \C$, in 1884 Mittag-Leffler proved that every domain $G \subset \C$ is a domain of
existence \cite[Chapter 10]{Hille} (this is no longer true for
higher dimensions, see e.g.~\cite{Krantz}). Moreover, $f \in H_e(G)$ if and only if $\rho (f,a) = {\rm dist}(a,\partial G)$
for all $a \in G$, where $\rho (f,a)$ denotes the radius of convergence of the Taylor series of $f$
with center at $a$. Of course, we have $H_e(G) \subset H_{we}(G)$, where
\,$H_{we} (G)$ \,stands for the class of functions which are holomorphic
weakly exactly on $G$, that is, $f \in H_{we}(G)$ \,if and only if $f$ has no holomorphic extension to any
domain containing $G$ strictly. But the reverse inclusion is not true:
take e.g.~$G = \C \setminus (-\infty ,0]$
and $f :=$ the principal branch of $\log z$. Observe that \,$H_e(G) = H_{we}(G)$
\,provided that \,$G$ \,is a {\it Jordan domain}, i.e.~a domain in $\C$ such that $\partial G$
is a homeomorphic image in $\C_\infty$ of a circle. Here $\C_\infty := \C \,\cup \, \{\infty\}$
denotes the one-point compactification of $\C$. Note that we allow unbounded domains: for instance,
an open half-plane is Jordan.

\vskip .15cm

Recall that a domain \,$G \subset \C$ \,is said to be {\it regular} if
\,$G = \overline{G}^0$ ($A^0$ denotes the interior of $A$, while $\overline{A}$ stands for the closure of $A$).
It is plain that every Jordan domain is regular, but there are regular (even simply connected) domains that are not Jordan,
for instance, $G = \{z: |z-1| < 1$ and $|z-(1/2)| > 1/2\}$.

\vskip .15cm

For every domain $G$ of a complex Banach space $E$, the space $H(G)$ will be endowed with the topology
of uniform convergence on compacta. If $E = \C^N$ $(N \in \N )$, $H(G)$ becomes a Fr\'echet space
(i.e.~a complete metrizable locally convex space), but it is no longer metrizable if $E$ is
infinite dimensional, see \cite{Alex,AnsP,Chae}. In 1933 Kierst and Szpilrajn \cite{KiS} showed in the case of
the open unit disc $G = \D = \{z \in \C : \, |z| < 1\}$ that the property discovered by Mittag-Leffler
is topologically generic; specifically, $H_e(\D)$ is residual (so dense) in $H(\D )$, that is, its complement
in $H(\D )$ is of first category. For extensions and improvements of the Mittag-Leffler and Kierst--Szpilrajn
theorems, see \cite{Bieber,Jarni,Kaha68,Lelong,Ryll}. Kahane and the author
(see \cite[Theorem 3.1 and following remarks]{Kah} and \cite[Theorem 3.1]{BerS}) observed that
the residuality of $H_e(G)$ holds for many subspaces $X$ of $H(G)$:

\begin{theorem} \label{Th Kahane}
Let $G \subset \C$ be a domain and $X$ be a Baire topological
vector space with $X \subset H(G)$ such that all evaluation functionals
$$
f \in X \mapsto
f^{(k)} (a) \in \C \,\,\,\, (a \in G, \, k \ge 0)
$$
are con\-ti\-nuous and,
for every  $a \in G$ and every  $r > {\rm dist}\,(a,\partial G)$,
there exists  $f \in X$  such that  $\rho (f,a) < r$.
Then $H_e(G) \cap X$ is residual in $X$.
\end{theorem}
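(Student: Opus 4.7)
The plan is to prove that $X\setminus(H_e(G)\cap X)$ is of first category, which together with the Baire hypothesis will yield residuality. The Mittag--Leffler characterisation recalled above says that $f\in H_e(G)$ iff $\rho(f,a)=d(a)$ for every $a\in G$, where $d(a):=\mathrm{dist}(a,\partial G)$; since $\rho(f,a)\ge d(a)$ is automatic, the complement consists of those $f$ for which $\rho(f,a)>d(a)$ at some $a\in G$.

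First I reduce this complement to a countable union of closed sets. Fix a countable dense subset $D\subset G$. Since both $a\mapsto d(a)$ and $a\mapsto\rho(f,a)$ are $1$-Lipschitz, the strict inequality $\rho(f,a)>d(a)$ persists in a neighbourhood whenever it holds, and therefore is witnessed by some $a\in D$ and some rational $r$ with $\rho(f,a)>r>d(a)$. By Cauchy--Hadamard, $\rho(f,a)>r$ is in turn equivalent to the existence of a rational $s>r$ and an integer $n\ge 1$ such that
\[
|f^{(k)}(a)|\le n\,k!\,s^{-k}\qquad\text{for every } k\ge 0.
\]
Continuity of the evaluation functionals $f\mapsto f^{(k)}(a)$ makes the set $F_{a,s,n}$ of $f\in X$ satisfying these inequalities closed in $X$, and
\[
X\setminus(H_e(G)\cap X)=\bigcup F_{a,s,n},
\]
the union being taken over the countable index set $\{(a,s,n):a\in D,\ s\in\Q,\ s>d(a),\ n\in\N\}$.

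The substance of the proof is the verification that each $F_{a,s,n}$ has empty interior. Assume for contradiction that some $F_{a,s,n}$ contains a neighbourhood $U$ of a point $f_0$. Since $s>d(a)$, the hypothesis on $X$ applied with $r=s$ yields $g\in X$ with $\rho(g,a)<s$. Continuity of the vector-space operations ensures $f_0+\lambda g\in U$ for every sufficiently small nonzero $\lambda\in\C$. But $f_0\in F_{a,s,n}$ forces $\rho(f_0,a)\ge s>\rho(g,a)$, and a standard $\limsup$ estimate on Taylor coefficients shows that when two power series at $a$ have distinct radii of convergence the sum has radius equal to the smaller of the two. Hence $\rho(f_0+\lambda g,a)=\rho(g,a)<s$, contradicting $f_0+\lambda g\in F_{a,s,n}$. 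So each $F_{a,s,n}$ is nowhere dense, and the Baire property of $X$ completes the argument.

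The only place the special hypothesis on $X$ is used is in producing the perturbation $g\in X$ of small radius at the given base point $a$; this is the conceptual heart of the proof, since the challenge is to stay \emph{inside} the possibly restrictive ambient space $X$ while shrinking the radius of convergence at a prescribed point. The other ingredients (the $1$-Lipschitz continuity of $\rho(f,\cdot)$ and $d(\cdot)$, Cauchy--Hadamard, and the fact that a sum of two power series with distinct radii has radius equal to the smaller) are routine.
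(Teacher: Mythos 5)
Your proof is correct and is essentially the standard argument behind this result (the paper only quotes Theorem \ref{Th Kahane} from \cite{Kah} and \cite{BerS} without reproducing a proof): one writes the complement of $H_e(G)\cap X$ in $X$ as a countable union of closed sets cut out by Cauchy--Hadamard bounds at points of a countable dense subset of $G$, and uses the perturbation hypothesis on $X$ to show each such set has empty interior. The steps you label routine (local $1$-Lipschitz continuity of $\rho(f,\cdot)$ and of the distance to the boundary, the Cauchy--Hadamard reformulation, and the fact that a sum of two power series with distinct radii of convergence has the smaller radius) all hold, so there is no gap.
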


Of course the case $X = H(G)$ is included. But Theorem \ref{Th Kahane} also includes some interesting
strict subspaces, such as Hardy ($H^p$, $p>0$) and Bergman ($B^p$, $p>0$) spaces (in the case
$G = \D$, see \cite{BerS}; for definitions and properties, see e.g.~\cite{Zhu})
and the space $A^\infty (G)$ (see \cite{BerCL}) if $G$ is a {\it regular}
domain. By $A^\infty (G)$ we have denoted the class of {\it boundary-regular} holomorphic functions in $G$,
that is, $A^\infty (G) = \{ f \in H(G): \ f^{(j)} $ has a continuous extension to $\overline{G}$ for all
$j \in \N_0 \} $, where $\N_0 := \{0,1,2, \dots \}$. It becomes a Fr\'echet space when it is endowed with the topology of uniform convergence
of functions and all their derivatives on each compact set $K \subset \overline{G}$.
Chmielowski \cite{Chm} had established in 1980 that
$H_e(G) \cap A^\infty (G) \ne \emptyset$ if $G$ is regular; see also \cite{Siciak}.

\vskip .15cm

In Section 2 we will recall some lineability notions and
review the main known results about the algebraic structure of $H_e(G)$ in $H(G)$ and in subspaces of it,
including the infinite dimensional case. Section 3 contains our new statements on the linear structure
--in its diverse degrees-- of $H_e(G)$, with special emphasis on the class of boundary-regular holomorphic functions.

\section{Lineability notions and known results}

\quad When a set $A$ lives in a bigger set $X$ endowed with some structure (vector space, topological vector space, algebra),
an alternative way to measure the size of $A$ involves finding large sub-structures within $A$.
A number of concepts have been coined in order to describe the
algebraic size of a set, see  \cite{AGS,Bay1,Ber2,GuQ}
(see also \cite{BPS} for an account of lineability properties of specific subsets of vector spaces).
Namely, if $X$ is a vector space, $\alpha$ is a cardinal number and $A \subset X$, then $A$ is said to be:
\begin{enumerate}
\item[$\bullet$] {\it lineable} if there is an infinite dimensional vector space $M$ such that $M \setminus \{0\} \subset A$,
\item[$\bullet$] {\it $\alpha$-lineable} if there exists a vector space $M$ with dim$(M) = \alpha$ and $M \setminus \{0\} \subset A$
(hence lineability means $\aleph_0$-lineability,
where $\aleph_0 = {\rm card}\,(\N )$, the cardinality of the set of positive integers), and
\item[$\bullet$] {\it maximal lineable} in $X$ if $A$ is ${\rm dim}\,(X)$-lineable.
\end{enumerate}
If, in addition, $X$ is a topological vector space, then $A$ is said to be:
\begin{enumerate}
\item[$\bullet$] {\it dense-lineable} or {\it algebraically generic} in $X$
whenever there is a dense vector subspace $M$ of $X$ satisfying $M \setminus \{0\} \subset A$
(hence dense-lineability implies lineability as soon as dim$(X) = \infty$),
\item[$\bullet$] {\it maximal dense-lineable} in $X$
whenever there is a dense vector subspace $M$ of $X$ satisfying $M \setminus \{0\} \subset A$ and
dim$\,(M) =$ dim$\,(X)$, and
\item[$\bullet$] {\it spaceable} in $X$ if there is a closed infinite dimensional vector
subspace $M$ such that $M \setminus \{0\} \subset A$
(hence spaceability implies lineability).
\end{enumerate}
And, according to \cite{APS,BarG}, when $X$ is a topological vector space contained in some (linear) algebra then
$A$ is called:
\begin{enumerate}
\item[$\bullet$] {\it algebrable} if there is an algebra \,$M$ so
    that $M \setminus \{0\} \subset A$ and $M$ is infinitely generated, that is, the cardinality of any system of generators of \,$M$ is infinite.
\item[$\bullet$] {\it densely algebrable} in $X$ if, in addition, $M$ can be taken dense in $X$.
\item[$\bullet$] {\it $\alpha$-algebrable} if there is an $\alpha$-generated algebra \,$M$ with \,$M \setminus \{0\} \subset A$.
\item[$\bullet$] {\it strongly $\alpha$-algebrable} if there exists an $\alpha$-generated {\it free} algebra \,$M$ with \,$M \setminus \{0\} \subset A$ (for $\alpha = \aleph_0$, we simply say {\it strongly algebrable}).
\item[$\bullet$] {\it densely strongly $\alpha$-algebrable} if, in addition, the free algebra \,$M$ can be taken dense in $X$.
\end{enumerate}
Note that if $X$ is contained in a commutative algebra then a set $B \subset X$ is a generating set of some free algebra contained in $A$
if and only if for any $N \in \N$, any nonzero polynomial $P$ in $N$ variables without constant term and any distinct $f_1,...,f_N \in B$, we have
$P(f_1,...,f_N) \ne 0$ and $P(f_1,...,f_N) \in A$. Observe that strong $\alpha$-algebrability $\Rightarrow$ $\alpha$-algebrability $\Rightarrow$ $\alpha$-lineability, and none of these implications can be reversed, see \cite[p.~74]{BPS}.

\vskip .15cm

The next dense-lineability criterion, that can be found in \cite[Theorem 2.3]{BerOrd} and is an extension of statements from \cite{AGPS,Ber,Ber2}, will be used later.

\begin{theorem}\label{maximal dense-lineable}
Assume that \,$X$ is a topological vector space. Let $A \subset X$ be an $\al$-lineable subset.
Suppose that there exists a
dense-lineable subset $B \subset X$ such that $A + B \subset A$, and that
$X$ has an open basis \,$\cal B$ for its to\-po\-lo\-gy
such that ${\rm card} ({\cal B}) \le \al$. Then $A$ is dense-lineable and if, in addition, $A \cap B = \emptyset$,
then $A \cup \{0\}$ contains a dense vector space $D$ with \,${\rm dim} (D) = \al$.
\end{theorem}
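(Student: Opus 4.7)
My plan is to build $D$ by grafting a Hamel basis of the $\alpha$-lineable witness $M$ for $A$ onto the dense witness $N$ for $B$, using the open basis $\mathcal{B}$ to force density. Fix a basis $\{m_\beta : \beta < \alpha\}$ of a subspace $M \subset X$ with $\dim M = \alpha$ and $M \setminus \{0\} \subset A$, together with a dense vector subspace $N \subset X$ with $N \setminus \{0\} \subset B$. Enumerate (with repetition if needed) the basis as $\mathcal{B} = \{U_\beta : \beta < \alpha\}$. For each $\beta$, the translate $(U_\beta - m_\beta)\setminus\{0\}$ is a nonempty open set, so density of $N$ lets me pick $n_\beta \in N \setminus \{0\}$ (hence $n_\beta \in B$) with $m_\beta + n_\beta \in U_\beta$. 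Set
$$
D := \text{span}\{m_\beta + n_\beta : \beta < \alpha\}.
$$

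Density of $D$ is then automatic: every nonempty open set contains some $U_\beta$, and $m_\beta + n_\beta \in U_\beta \cap D$. To prove $D \setminus \{0\} \subset A$, decompose a nonzero $d \in D$ as $d = m + n$, with $m := \sum_i c_i m_{\beta_i} \in M$ and $n := \sum_i c_i n_{\beta_i} \in N$. The case $m = 0 \neq n$ is excluded: linear independence of $\{m_\beta\}$ forces every $c_i = 0$, hence $n = 0$, contradicting $d \neq 0$. The surviving cases are $n = 0 \neq m$, giving $d = m \in M \setminus \{0\} \subset A$, and $m, n$ both nonzero, giving $d = m + n \in A + B \subset A$. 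Combined with the density of $D$, this establishes the first conclusion (in the standard Hausdorff infinite-dimensional setting, a dense subspace is automatically infinite-dimensional).

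For the sharper statement under $A \cap B = \emptyset$, I would verify that $\{m_\beta + n_\beta : \beta < \alpha\}$ is linearly independent, whence $\dim D = \alpha$. Indeed, if $\sum_i c_i(m_{\beta_i} + n_{\beta_i}) = 0$ with some $c_i \neq 0$, then $m = -n$ with $m \in M \setminus \{0\} \subset A$ and $n \in N \setminus \{0\} \subset B$, placing $m \in A \cap B$ against the hypothesis. The main technical point to navigate is precisely this interplay: density pins each $m_\beta + n_\beta$ to a prescribed basic neighborhood, while $D \setminus \{0\} \subset A$ and the dimension count both hinge on the $M$-component of any nonzero combination being itself nonzero --- a property coming from the linear independence of the $m_\beta$, then sharpened to linear independence of the $m_\beta + n_\beta$ by the disjointness hypothesis $A \cap B = \emptyset$.
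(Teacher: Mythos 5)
Your proof is correct, and it is essentially the argument behind this criterion: the paper itself gives no proof of Theorem \ref{maximal dense-lineable} (it is quoted from \cite[Theorem 2.3]{BerOrd}), and the proof there is the same perturbation scheme --- index a Hamel basis $\{m_\beta\}$ of the $\alpha$-dimensional witness by an enumeration of $\cal B$, use density of the subspace inside $B\cup\{0\}$ to place $m_\beta+n_\beta$ in $U_\beta$, and run exactly your case analysis on $d=m+n$ (with $m\ne 0$ forced by independence of the $m_\beta$, and $A\cap B=\emptyset$ upgrading this to independence of the $m_\beta+n_\beta$). The only cosmetic points are that one should discard any empty members of $\cal B$ and note that nonempty open sets in the nontrivial TVS $X$ are infinite, so $(U_\beta-m_\beta)\setminus\{0\}$ indeed meets $N$; neither affects the argument.
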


As for spaceability, we provide the following statement that is ascribed by Kitson and Timoney \cite{KitT} to Kalton.
It is, in turn, a Fr\'echet version of an earlier result by Wilansky \cite{Wil} given in the Banach setting.
Theorem \ref{Wilansky-Kalton} will be needed in the proof of Theorem \ref{H_e(G)Ainfty spaceable} below.

\begin{theorem} \label{Wilansky-Kalton}
If $X$ is a Fr\'echet space and $Y \subset X$ is a closed linear subspace, then
the complement $X \setminus Y$ is spaceable if and only if \,$Y$ has infinite codimension.
\end{theorem}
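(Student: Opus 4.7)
\emph{Necessity.} For the $(\Rightarrow)$ direction I argue by contrapositive. If $Y$ has finite codimension $n$, then for any linear subspace $M\subseteq X$ satisfying $M\setminus\{0\}\subseteq X\setminus Y$, that is, $M\cap Y=\{0\}$, the quotient map $q\colon X\to X/Y$ is injective on $M$, so $\dim M\le n<\infty$. Hence $X\setminus Y$ contains no closed infinite dimensional subspace and is not spaceable.

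\emph{Sufficiency.} Now suppose $Y$ is closed and of infinite codimension. Since $X$ is Fr\'echet and $Y$ is closed, $X/Y$ inherits a Fr\'echet topology in which it is infinite dimensional. By the classical basic sequence selection principle, valid in any infinite dimensional Fr\'echet space, $X/Y$ contains a basic sequence $(\eta_n)_{n\ge 1}$. Picking any representatives $\xi_n\in q^{-1}(\eta_n)$ and refining them by small elements of $Y$ through a gliding--hump perturbation controlled by an increasing sequence of defining seminorms $(p_k)$ of $X$, one arranges that the lifted sequence $(\xi_n)$ is itself basic in $X$ while still satisfying $q(\xi_n)=\eta_n$.

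Let $M:=\overline{\mathrm{span}}\{\xi_n:n\ge 1\}\subseteq X$, which is closed and infinite dimensional. Every $x\in M$ has a unique basic expansion $x=\sum_n a_n\xi_n$ converging in $X$, and applying $q$ yields $q(x)=\sum_n a_n\eta_n$ in $X/Y$, which is the basic expansion of $q(x)$. If $x\in Y$ then $q(x)=0$, and uniqueness of basic expansions forces $a_n=0$ for every $n$, so $x=0$. Therefore $M\cap Y=\{0\}$, i.e.~$M\setminus\{0\}\subseteq X\setminus Y$, and $X\setminus Y$ is spaceable.

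\emph{Main obstacle.} The substantive difficulty is the perturbation/lifting step: in the Banach case one need only control a single norm to invoke the Bessaga--Pe\l{}czy\'nski perturbation lemma, whereas in the Fr\'echet setting one must run a diagonal argument over the countable family $(p_k)$ of defining seminorms in order to upgrade a mere lift of a basic sequence in $X/Y$ to a genuine basic sequence in $X$. Granting this lemma, the remainder of the argument is purely formal.
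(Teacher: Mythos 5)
Your necessity half is complete and correct: if $\dim (X/Y)=n<\infty$, then any subspace $M$ with $M\cap Y=\{0\}$ injects into $X/Y$ under the quotient map, so $\dim M\le n$ and no infinite dimensional (let alone closed) $M$ can sit inside $(X\setminus Y)\cup\{0\}$. For context, the paper itself offers no proof of this theorem: it is quoted from Kitson and Timoney, who attribute it to Kalton (the Banach case being Wilansky's), so the only issue is the internal soundness of your argument.

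The sufficiency half has a genuine gap, and it sits exactly where you flag it. Two non-trivial facts are invoked without proof. First, the existence of a basic sequence in an arbitrary infinite dimensional Fr\'echet space is not a ``classical selection principle'' that can be cited casually: Mazur's construction is a Banach-space argument, its extension to Fr\'echet spaces is itself a theorem of Kalton (1974), and the analogous statement fails for general F-spaces, so some justification is owed. Second, and more seriously, the lifting step --- upgrading representatives $\xi_n\in q^{-1}(\eta_n)$ to a basic sequence in $X$ by perturbations inside $Y$ --- is precisely the content of the lemma of Kalton on which Kitson and Timoney base their proof of this very theorem (one constructs a basic sequence in $X$ whose closed linear span meets $Y$ only in $0$). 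Arbitrary lifts of a basic sequence need not be basic, and it is not evident that a gliding hump over the seminorms $(p_k)$ applied to pre-chosen lifts converges to a correct choice; the known construction is instead an inductive Mazur-type selection carried out in $X$ itself, using continuous functionals that vanish on $Y$ to control both basicity and transversality simultaneously. By explicitly ``granting this lemma'' you have reduced the theorem to an unproved statement that carries essentially all of its content. The concluding formal step (uniqueness of the coefficients in $\sum_n a_n\eta_n=0$ forces $M\cap Y=\{0\}$) is fine once that lemma is available.
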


Concerning algebrability, the following criterion is given in \cite[Proposition 7]{BalBF} (see also \cite[Theorem 1.5]{BBFG}) for
a family of functions ${\cal F} \subset \R^{[0,1]}$. By mimicking its proof, in Proposition \ref{exponentials} below we provide an extension
to the case ${\cal F} \subset \C^\Omega$, which will be needed in Section 3. By $\cal E$ we denote the family of exponential-like functions
$\C \to \C$, that is, the functions of the form \,$\varphi (z) = \sum_{j=1}^m a_j e^{b_j z}$ \,for some $m \in \N$, some $a_1,...,a_m \in \C \setminus \{0\}$ and some distinct $b_1,...,b_m \in \C \setminus \{0\}$. As usual, $\mathfrak{c}$ will stand for the cardinality of the continuum.

\begin{proposition} \label{exponentials}
Let \,$\Omega$ be a nonempty set and ${\cal F} \subset \C^\Omega$. Assume that there exists a function \,$f \in {\cal F}$
such that \,$f(\Omega )$ is uncountable and \,$\varphi \circ f \in {\cal F}$ \,for every $\varphi \in {\cal E}$.
Then \,${\cal F}$ is strongly $\mathfrak{c}$-algebrable. More precisely, if $H \subset (0,+\infty )$ is a set with
\,{\rm card}$(H) = \mathfrak{c}$ \,and linearly independent over the field $\Q$ of rational numbers, then
$$
\{\exp \circ (rf): \, r \in H\}
$$
is a free system of generators of an algebra contained in \,${\cal F} \cup \{0\}$.
\end{proposition}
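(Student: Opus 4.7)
The plan is to verify the more precise claim directly: fix a set $H\subset (0,+\infty)$ with $\mathrm{card}(H)=\mathfrak{c}$ which is linearly independent over $\mathbb{Q}$ (such $H$ exists: a Hamel basis of $\mathbb{R}$ over $\mathbb{Q}$ has cardinality $\mathfrak{c}$, and replacing each negative basis vector by its opposite produces a $\mathbb{Q}$-linearly independent subset of $(0,+\infty)$ of the same cardinality), set $g_r:=\exp\circ (rf)$ for $r\in H$, and show that for every $N\in\mathbb{N}$, every nonzero polynomial $P$ in $N$ variables with no constant term, and every distinct $r_1,\dots,r_N\in H$, the function $P(g_{r_1},\dots,g_{r_N})$ is a nonzero member of $\mathcal{F}$. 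This is exactly the criterion for freeness and algebraic containment noted in the excerpt just before the statement.

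Next, I would expand. Writing $P(X_1,\dots,X_N)=\sum_{k\in S}c_k\,X_1^{k_1}\cdots X_N^{k_N}$ over a finite set $S\subset\mathbb{N}_0^N\setminus\{0\}$ of multi-indices with $c_k\ne 0$, I get, pointwise on $\Omega$,
\[
P(g_{r_1},\dots,g_{r_N})(\omega)
=\sum_{k\in S}c_k\,\exp\bigl(\lambda_k f(\omega)\bigr),
\qquad \lambda_k:=k_1r_1+\cdots+k_N r_N.
\]
Since $r_1,\dots,r_N$ are $\mathbb{Q}$-linearly independent, the map $k\mapsto \lambda_k$ is injective on $S$; and since $r_j>0$ and $k\ne 0$, each $\lambda_k$ is a (strictly) positive real, so in particular $\lambda_k\ne 0$. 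Thus the function $\varphi(z):=\sum_{k\in S}c_k\,e^{\lambda_k z}$ belongs to the class $\mathcal{E}$ of exponential-like functions, and $P(g_{r_1},\dots,g_{r_N})=\varphi\circ f$.

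Finally, the hypothesis on $\mathcal{F}$ yields $\varphi\circ f\in\mathcal{F}$, so it only remains to check $\varphi\circ f\not\equiv 0$. Here the uncountability of $f(\Omega)$ is decisive: since the exponentials $\{e^{\lambda z}\}_{\lambda}$ with distinct $\lambda$ are linearly independent, $\varphi$ is a nonzero entire function, hence its zero set is discrete and therefore at most countable. As $f(\Omega)$ is uncountable, there exists $\omega\in\Omega$ with $f(\omega)\notin\varphi^{-1}(0)$, so $(\varphi\circ f)(\omega)\ne 0$. This simultaneously shows that $P(g_{r_1},\dots,g_{r_N})\in\mathcal{F}$ and that it is nonzero, which is exactly what is required.

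The only delicate point is guaranteeing the non-vanishing of $\varphi\circ f$: without the uncountability of $f(\Omega)$ one cannot rule out that $f$ takes values only in the (possibly dense, e.g.\ $2\pi i\mathbb{Z}$-like) zero set of some exponential polynomial. The $\mathbb{Q}$-linear independence of $H$ is equally essential, but only to guarantee that the collapsing of monomials after expansion cannot occur, so that the resulting $\varphi$ is genuinely a nonzero element of $\mathcal{E}$.
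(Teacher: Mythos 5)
Your proposal is correct and follows essentially the same route as the paper: expand $P(\exp\circ(r_1f),\dots,\exp\circ(r_Nf))$ as $\varphi\circ f$ with $\varphi\in{\cal E}$, use the $\Q$-linear independence and positivity of the $r_j$ to get distinct nonzero exponents, and play the countability of $\varphi^{-1}(\{0\})$ against the uncountability of $f(\Omega)$. The only (immaterial) difference is that you justify $\varphi\not\equiv 0$ by invoking the standard linear independence of $\{e^{\lambda z}\}$ for distinct $\lambda$, whereas the paper proves it directly via a growth estimate of $|\varphi|$ along a suitably chosen ray.
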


\begin{proof}
Firstly, each function \,$\varphi (z) = \sum_{j=1}^m a_j e^{b_j z}$ in ${\cal E}$ (with $a_1, \dots ,b_m \in \C \setminus \{0\}$
and $b_1,...,b_m$ distinct) has at most countably many zeros. Indeed, we can assume $|b_1| = \cdots = |b_p| > |b_j|$ $(j=p+1,...,m)$.
Then $b_j = |b_1| e^{i \theta_j}$ $(j=1,...,p)$ with $|\theta_j - \theta_1| \in (0,\pi ]$ for $j=2,...,p$
(so $c_j := \cos (\theta_j - \theta_1) < 1$ for $j=2,...,p$). Hence we have for all $r > 0$ that
$$
|\varphi (re^{-i \theta_1})| \ge |a_1| e^{|b_1|r} - \sum_{j=2}^p |a_j| e^{|b_1|c_j r} - \sum_{j=p+1}^m |a_j| e^{|b_j|r} \longrightarrow +\infty
\,\,\, \hbox{as} \,\, r \to +\infty .
$$
Therefore $\varphi$ is a nonconstant entire function, so $\varphi^{-1}(\{0\})$ is countable. Now, consider a
nonzero polynomial $P$ in $N$ complex variables without constant term, as well as numbers $r_1, \dots ,r_N \in H$.
The function $\Phi : \Omega \to \C$ given by $\Phi = P(\exp \circ (r_1f), \dots ,\exp \circ (r_Nf)) \in \C$ is of the form
$$
\sum_{j=1}^m a_i (e^{r_1f(x)})^{k(j,1)} \cdots (e^{r_Nf(x)})^{k(j,N)} = \sum_{j=1}^m a_i \exp \left(f(x) \sum_{l=1}^N r_l k(j,l)\right),
$$
where $a_1, \dots ,a_m \in \C \setminus \{0\}$ and the matrix $[k(j,l)]_{j=1,...,m \atop l=1,...,N}$ of nonnegative integers has
distinct nonzero rows. Thus, the numbers $b_j := \sum_{l=1}^N r_l k(j,l)$ $(j=1,...,m)$ are distinct and nonzero; hence the function
\,$\varphi (z) :=
\sum_{j=1}^m a_j e^{b_j z}$ \,belongs to \,$\cal E$. But $\Phi = \varphi \circ f$, so if $\Phi \equiv 0$ then we would have
$\varphi |_{f(\Omega )} \equiv 0$, which contradicts the fact that $f(\Omega )$ is uncountable. Consequently, $\Phi \ne 0$ and,
by hypothesis, $\Phi \in {\cal F}$. This proves the proposition.
\end{proof}

Turning to our setting of non-extendable holomorphic functions, and using the previous terminology,
Aron, Garc\'{\i}a and Maestre \cite{AGM} proved in 2001 the following.

\begin{theorem} \label{AronGarciaMaestre}
Assume that $N \in \N$ and $G \subset \C^N$ is a domain of existence. Then $H_e(G)$ is dense-lineable, spaceable and algebrable.
In fact, there is a closed infinitely generated algebra contained in $H_e(G)$.
\end{theorem}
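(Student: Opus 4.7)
Pick $f_0\in H_e(G)$, which exists by hypothesis. Since $f_0$ is non-constant on the connected open set $G\subset\C^N$, its image $f_0(G)$ is open in $\C$ by the open mapping theorem, hence uncountable. This common setup serves all three claims below.

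For algebrability, I would invoke Proposition \ref{exponentials} with $\mathcal F=H_e(G)$, $\Omega=G$, and $f=f_0$. The uncountability condition on $f_0(G)$ is already in hand; what remains is to verify that $\varphi\circ f_0\in H_e(G)$ for every $\varphi\in\mathcal E$, and I expect this to be the main technical obstacle. The plan is a proof by contradiction: suppose $\varphi\circ f_0$ admits a holomorphic extension $\widetilde\Psi\in H(G_1)$ through some boundary pair $(G_1,G_2)$. Choose a small open ball $W\subset G_1$ meeting both $G_2$ and $G_1\setminus G$ (available because $G_1$ is connected and not contained in $G$) on which $\widetilde\Psi$ avoids the discrete critical-value set of $\varphi$. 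A local branch of $\varphi^{-1}$, analytically continued across $W$ (which is simply connected, after removing a discrete set in the case $N=1$), then gives a holomorphic extension of $f_0$ to $W$, contradicting $f_0\in H_e(G)$. Proposition \ref{exponentials} thus supplies a free algebra $\mathcal A$ of dimension $\mathfrak c$ with $\mathcal A\setminus\{0\}\subset H_e(G)$, establishing algebrability (and, in passing, maximal $\mathfrak c$-lineability).

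For dense-lineability, I would apply Theorem \ref{maximal dense-lineable} with $A=H_e(G)$ (now known to be $\mathfrak c$-lineable) and $B$ a dense vector subspace of $H(G)$ consisting of functions extending past $\partial G$---for instance, the restrictions to $G$ of functions holomorphic on a Runge-type exhaustion of $G$. The key structural property $A+B\subset A$ is immediate (if $f+g$ extended and $g$ already extends, then $f$ would extend too), and $A\cap B=\emptyset$ by construction. Density of $B$ in $H(G)$ rests on Oka--Weil approximation and the fact that $G$ is a domain of holomorphy; this is the most delicate step here. The open-basis condition is fulfilled by separability of $H(G)$, and the theorem produces a dense vector subspace $D\subset H(G)$ with $\dim D=\mathfrak c$ and $D\setminus\{0\}\subset H_e(G)$.

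For spaceability and the closed infinitely generated algebra, enumerate a countable family $\{(U_n,V_n)\}$ of pairs of rational open balls with $V_n\subset G\cap U_n$ and $U_n\not\subset G$, and set $Y_n=\{f\in H(G):\exists\,\tilde f\in H(U_n)\text{ with }\tilde f=f\text{ on }V_n\}$. Each $Y_n$ is a closed proper subspace of the Fr\'echet space $H(G)$ (closedness by continuity of evaluation functionals together with the completeness of $H(U_n)$; properness because $H_e(G)\ne\emptyset$), and every extendable function lies in some $Y_n$. Moreover each $Y_n$ has infinite codimension, via the identification $Y_n\cong H(G\cup U_n)$ coming from unique analytic continuation, together with the infinite-dimensionality of the quotient $H(G)/H(G\cup U_n)$ for a non-trivial boundary-crossing ball. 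Theorem \ref{Wilansky-Kalton}, combined with a diagonal argument across the countably many $Y_n$, then yields a closed infinite dimensional subspace $S\subset H(G)$ with $S\setminus\{0\}\subset H_e(G)$. To upgrade to a closed infinitely generated algebra, select an infinite algebraically independent sequence inside $S$ and take the closed subalgebra they generate, checking via the same entire-function-addition closure and the $Y_n$ framework that every nonzero element of the closure stays in $H_e(G)$.
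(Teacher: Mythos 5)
This theorem is not proved in the paper at all: it is quoted from Aron, Garc\'{\i}a and Maestre \cite{AGM}, so there is no internal proof to compare against. Judged on its own merits, your proposal has several genuine gaps, of which the following are the most serious.

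The algebrability step rests on the unproved claim that $\varphi\circ f_0\in H_e(G)$ for every $\varphi\in{\cal E}$ and every $f_0\in H_e(G)$. Your local-inversion sketch needs a single-valued branch of $\varphi^{-1}$ along the continuation of $\widetilde\Psi$ across $\partial G$, and the obstructions are not only the critical values of $\varphi$ but also its omitted and asymptotic values (for $\varphi=e^z$ the problematic value is $0$, which is not a critical value); worse, a generic $f_0\in H_e(G)$ has no boundary limits, so there is no way to pin down \emph{which} sheet of $\varphi^{-1}$ recovers $f_0$ as you approach $\partial G$. This is exactly why the paper's Lemma \ref{Lemma-composition} assumes $f\in A^1(G)$ (so that $f(z_1)$, and hence the correct local branch, is well defined), and why Theorem \ref{H_e(G)densely strongly c-algebrable} proves algebrability in $H(G)$ by a completely different route, building generators with prescribed growth $e^{n^\alpha}$ along a sequence clustering at every boundary point. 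The dense-lineability step also fails as stated: to get $A+B\subset A$ you need every $g\in B$ to continue across \emph{every} boundary pair, which forces $B$ to consist of restrictions of functions holomorphic on a neighbourhood of $\overline G$; but such functions need not be dense in $H(G)$. For the domain of existence $G=\C\setminus[0,1]$ one has $\overline G=\C$, so $B$ reduces to entire functions, which cannot approximate $1/(z-\tfrac12)$ uniformly on the circle $|z-\tfrac12|=\tfrac34\subset G$ (the contour integral obstructs it). Oka--Weil approximates by elements of $H(G)$, not by functions holomorphic past $\partial G$.

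The spaceability argument contains an outright error: the sets $Y_n$ are not closed in $H(G)$. Take $G=\D$, $V_n=B(\tfrac12,\tfrac14)$, $U_n=B(\tfrac12,\tfrac34)$: the partial sums of $\sum_j z^j$ all lie in $Y_n$ and converge in $H(\D)$ to $1/(1-z)$, which has a pole at $1\in U_n$ and so is not in $Y_n$ --- local uniform convergence on $V_n$ gives no control whatsoever over the extensions on $U_n$. The correct framework treats each $Y_n$ as a (non-closed) operator range, e.g.\ the image of $\{(f,h)\in H(G)\times H(U_n):\ f=h \hbox{ on } V_n\}$ under the first projection, and invokes the full Kitson--Timoney theorem on complements of countable unions of operator ranges; neither Theorem \ref{Wilansky-Kalton} (one closed subspace) nor a ``diagonal argument'' produces a closed infinite-dimensional subspace avoiding countably many subspaces. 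Finally, the closed infinitely generated algebra --- the strongest assertion of the theorem --- is essentially not addressed: algebraic independence of generators inside $S$ says nothing about non-extendability of the combinations $P(f_{i_1},\dots,f_{i_p})$, let alone of their limits in the closed algebra, and this is precisely where the real work in \cite{AGM} lies.
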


Recall that a subset $A$ of a locally convex space $E$ is said to be {\it sum-absorbing}
whenever there is $\la > 0$ such that $\la (A + A) \subset A$, and $E$ is called {\it nearly-Baire}
if, given a sequence $(A_j)$ of sum-absorbing balanced closed subsets with $E = \bigcup_{j=1}^\infty A_j$,
there is $j_0$ such that $A_{j_0}$ is a neighborhood of $0$. In 2008, Valdivia \cite{Val} showed that
the dense subspace contained in $H_e(G) \cup \{0\}$ can be chosen to be nearly-Baire for any domain
of existence $G \subset \C^N$. In the case $N=1$, the author had demonstrated in 2006 \cite{BerB} that
$H_e(G)$ is {\it maximal dense-lineable} in $H(G)$ for any domain $G \subset \C$.

\vskip .15cm

In the special case $G = \D$, Aron {\it et al.}~\cite{AGM} considered the nonseparable Banach space
$H^\infty := \{f \in H(\D ): \, f$ is bounded on $\D\}$, endowed with the supremum norm, and showed
that $H_e(\D ) \cap H^\infty$ contains, except for zero, an infinitely generated algebra that is nonseparable
and closed in $H^\infty$. The author \cite{BerS} obtained in 2005 that, under appropriate conditions
on a function space $X \subset H(G)$, the set $H_e(\D) \cap X$ is dense-lineable or spaceable in $X$.
In particular, the families $H_e(\D ) \cap H^p , \, H_e(\D ) \cap B^p$ $(p>0)$ and $H_e(\D ) \cap A^\infty (\D )$
turn out to be dense-lineable as well as spaceable in $H^p, \, B^p$ and $A^\infty (\D )$, respectively.

\vskip .15cm

We say that a domain $G \subset \C$ is {\it finite-length} provided that there is $M \in (0,+\infty )$ such that
for any pair $a,b \in G$ there exists a curve $\gamma \subset G$ joining \,$a$ \,to \,$b$ \,for which ${\rm length} (\gamma ) \le M$.
In 2008, Bernal {\it et al.}~\cite{BerCL} established the following assertion, showing that regularity plus appropriate
metrical and topological conditions assure dense-lineability for $H_e(G)$ in $A^\infty (G)$.

\begin{theorem} \label{Bernal-MCCM-Luh}
Let \,$G \subset \C$ be a finite-length regular domain such that \,$\C \setminus \overline{G}$
\,is connected. Then \,$H_e(G) \cap A^\infty (G)$ \,is dense-lineable in \,$A^\infty (G)$.
\end{theorem}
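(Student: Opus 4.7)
The plan is to apply Theorem \ref{maximal dense-lineable} with $X = A^\infty(G)$, $A = H_e(G) \cap A^\infty(G)$, $\alpha = \aleph_0$, and $B$ equal to the subspace of $A^\infty(G)$ consisting of functions that extend holomorphically to some open neighbourhood of $\overline{G}$. Note that $\overline{G}$ is compact (the finite-length hypothesis forces bounded diameter) and $A^\infty(G)$ embeds as a closed subspace of the separable Fr\'echet space $C^\infty(\overline{G})$, so $A^\infty(G)$ is itself a separable Fr\'echet space and hence has a countable open base, matching the cardinality requirement on $\alpha$.

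To produce an $\aleph_0$-lineable subset of $A \cup \{0\}$, I would invoke Chmielowski's theorem (valid because $G$ is regular) to fix some $f_0 \in H_e(G) \cap A^\infty(G)$, and then set $M := \{p \cdot f_0 : p \in \C[z]\}$. This is an $\aleph_0$-dimensional subspace of $A^\infty(G)$: the map $p \mapsto p f_0$ is injective because $f_0 \not\equiv 0$, and $A^\infty(G)$ is closed under multiplication by polynomials. The point is that $p f_0 \in H_e(G)$ for every nonzero $p$: any holomorphic extension $\tilde{h}$ of $p f_0$ to a domain $G_1 \not\subset G$ would, upon division by $p$ and choice of a point $w^* \in G_1 \cap \partial G$ with $p(w^*) \ne 0$, yield a holomorphic extension of $f_0$ past $\partial G$, contradicting $f_0 \in H_e(G)$. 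Such a $w^*$ exists because $G_1 \cap \partial G$ is uncountable --- regularity of $G$ forces every ball around a boundary point to meet $\partial G$ in an uncountable set, since otherwise a countable set would separate the disc into the two nonempty open pieces $G$ and $\C \setminus \overline{G}$, which is impossible in $\R^2$ --- while $p$ has only finitely many zeros.

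The absorption $A + B \subset A$ is direct: if $f \in A$, $g \in B$, and $f+g$ extended across $\partial G$, then subtracting the extension of $g$ (which exists on an open neighbourhood of $\overline{G}$) would produce a forbidden extension of $f$. It remains to show that $B$ is dense in $A^\infty(G)$; since $B$ is itself a vector subspace, this makes it dense-lineable. This is the principal obstacle --- an $A^\infty$-version of Mergelyan's theorem, requiring that every $f \in A^\infty(G)$ be approximable on $\overline{G}$, together with \emph{all} its derivatives, by functions holomorphic on a strictly larger open set. Here the finite-length hypothesis plays the decisive role by supplying the uniform accessibility of $\partial G$ needed to control all derivatives simultaneously, while the connectedness of $\C \setminus \overline{G}$ would even permit, via Runge, a further polynomial approximation. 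Once this density is in place, Theorem \ref{maximal dense-lineable} applied with the above data delivers the dense-lineability of $H_e(G) \cap A^\infty(G)$ in $A^\infty(G)$.
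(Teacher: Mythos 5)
Your overall scaffolding is correct and is in fact the same machinery the paper uses for the neighbouring results (Theorem \ref{maximal dense-lineable} is exactly how part (c) of Theorem \ref{Jordan-X-finitelength} is derived in \cite{BerOrd}); note, though, that the paper itself gives no proof of Theorem \ref{Bernal-MCCM-Luh} but quotes it from \cite{BerCL}. The pieces you do carry out are sound: $\overline{G}$ is compact, $A^\infty(G)$ embeds into the separable metrizable space $C(\overline{G})^{\N_0}$ via $f\mapsto (f^{(j)})_{j\ge 0}$ and is therefore second countable; the polynomial multiples $p\,f_0$ of a Chmielowski function give an $\aleph_0$-dimensional subspace inside $H_e(G)\cap A^\infty(G)\cup\{0\}$ (this is essentially Theorem \ref{Jordan-X-finitelength}(b), and your division argument works because a regular boundary has no isolated points); and $A+B\subset A$ holds.

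The genuine gap is the density of $B$ in $A^\infty(G)$, which you explicitly defer (``Once this density is in place\dots''). This is not a checkable side condition: it is the technical core of the theorem and the \emph{only} place where the hypotheses ``finite-length'' and ``$\C\setminus\overline{G}$ connected'' enter, so a proof that omits it proves nothing beyond the lineability already contained in Theorem \ref{Jordan-X-finitelength}(b). What is needed is the following lemma: given $f\in A^\infty(G)$, $k\in\N$ and $\ve>0$, apply Mergelyan's theorem (not Runge's --- $f^{(k)}$ is only known to be continuous on $\overline{G}$ and holomorphic on $\overline{G}^{\,0}=G$, where regularity is used, and $\C\setminus\overline{G}$ connected is the Mergelyan hypothesis) to obtain a polynomial $q$ with $\sup_{\overline{G}}|f^{(k)}-q|<\delta$; then take the $k$-fold antiderivative $F$ of $q$ whose derivatives of order $<k$ match those of $f$ at a base point $z_0\in G$, and integrate the estimate along curves in $G$ of length at most $M$ to get $\sup_{\overline{G}}|f^{(j)}-F^{(j)}|\le \delta M^{k-j}$ for $0\le j\le k$. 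Choosing $\delta$ small makes $F$ an $\ve$-approximation of $f$ in the seminorm involving the first $k$ derivatives, whence the polynomials (a subspace of your $B$) are dense in $A^\infty(G)$. Your phrase ``uniform accessibility of $\partial G$'' gestures at this integration step but does not perform it, and your remark that connectedness of $\C\setminus\overline{G}$ ``would even permit a further polynomial approximation'' misplaces its role: without it the Mergelyan step, and hence the density of $B$, is unavailable. Supplying this lemma completes the argument.
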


And Valdivia \cite{Val2} obtained in 2009
the following more precise result for the bigger class \,$H_{we}(G)$ \,under less restrictive assumptions.

\begin{theorem} \label{Valdivia}
Let $G \subset \C$ be a regular domain. Then there exists a nearly-Baire dense subspace \,$M \subset A^\infty (G)$ \,such
that \,$M \subset A^\infty (G)$ \,and \,$M \setminus \{0\} \subset H_{we}(G)$.
\end{theorem}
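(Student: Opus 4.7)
My plan is to (i) combine a single non-extendable function in $A^\infty(G)$ with a dense subspace of ``extendable'' functions via Theorem~\ref{maximal dense-lineable} to obtain a dense subspace of $A^\infty(G)$ whose nonzero elements lie in $H_{we}(G)$, and (ii) refine the construction to ensure the nearly-Baire property.

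For step (i), the regularity of $G$ together with Chmielowski's theorem \cite{Chm} (cf.~\cite{Siciak}) delivers a function $f_0 \in H_e(G) \cap A^\infty(G) \subset H_{we}(G) \cap A^\infty(G)$. Set $A := H_{we}(G) \cap A^\infty(G)$, and let $B$ be the subspace of $A^\infty(G)$ consisting of functions that extend holomorphically to some open neighborhood of $\overline{G}$. A Runge/Mergelyan-type approximation, exploiting the regularity of $G$, makes $B$ dense in $A^\infty(G)$. The crucial algebraic fact is the stability $A + B \subset A$: if $f \in A$ and $g \in B$, any hypothetical extension of $f + g$ across $\partial G$, combined with the given extension of $g$, would yield an extension of $f$, contradicting $f \in H_{we}(G)$. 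Plainly $A \cap B = \emptyset$, and $A$ is $\aleph_0$-lineable because $\{e^{nz} f_0 : n \in \N\}$ is a linearly independent family contained in $A$ (the entire factor $e^{nz}$ is nowhere zero, so a hypothetical extension of $e^{nz} f_0$ would force $f_0$ to extend). Theorem~\ref{maximal dense-lineable} then yields a dense vector subspace $D \subset A^\infty(G)$ with $D \setminus \{0\} \subset H_{we}(G)$.

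For step (ii), the upgrade of $D$ to a nearly-Baire subspace $M$ has to be carried out by hand, along the lines of Valdivia \cite{Val,Val2}. I would construct $M$ as an ascending inductive union $M = \bigcup_{n \in \N} M_n$ of finite-dimensional subspaces of $A^\infty(G)$ and arrange that: (a) $\bigcup_n M_n$ is dense in $A^\infty(G)$; (b) every nonzero element of $M$ still lies in $H_{we}(G)$, which is preserved by adjoining at each step only basis vectors of the form $c f_0 + h$ with $c \ne 0$ and $h \in B$, invoking $A + B \subset A$; and (c) the sum-absorbing covering axiom holds. Part (c) is forced by a diagonal argument: given any candidate cover $M = \bigcup_j A_j$ by sum-absorbing, balanced, closed sets with none being a neighborhood of $0$, the quantitative control on the building blocks $M_n$ permits extraction of a specific vector that contradicts $\lambda (A_{j_0} + A_{j_0}) \subset A_{j_0}$ for some $j_0$.

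The principal obstacle is step (ii). Once the stability $A + B \subset A$ is set up, dense-lineability is essentially automatic via Theorem~\ref{maximal dense-lineable}, but the nearly-Baire property is a genuinely structural constraint: because ``sum-absorbing'' is strictly weaker than being closed under addition, the standard Baire category dichotomy does not apply directly, so the contradiction must be squeezed out by balancing the density built into $\bigcup_n M_n$ against the quantitative condition $\lambda (A+A) \subset A$, following the fine inductive strategy of Valdivia in \cite{Val,Val2}.
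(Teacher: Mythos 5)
First, note that the paper does not actually prove this statement: Theorem \ref{Valdivia} is quoted as a known result from Valdivia \cite{Val2}, so your proposal can only be judged on its own terms. There it has a fatal structural flaw in step (ii). You propose to realize the nearly-Baire subspace as an increasing union $M = \bigcup_{n} M_n$ of finite-dimensional subspaces, but any such $M$ has dimension at most $\aleph_0$, and an infinite-dimensional topological vector space of countable dimension is \emph{never} nearly-Baire. This is precisely the argument the paper itself runs in the proof of Theorem \ref{H_e(G)Ainfty maximal dense-lineable}: writing $M = \bigcup_n X_n$ with $\dim X_n = n$, each $X_n$ is closed, balanced and sum-absorbing, so the nearly-Baire property would force some $X_m$ to be a neighborhood of $0$ and hence $M = X_m$, contradicting $\dim M = \infty$. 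The same applies verbatim to your $M_n$'s; no diagonal argument can rescue it. Any correct proof must produce a dense subspace of \emph{uncountable} dimension, which is exactly why Valdivia's interpolation machinery (of the kind recorded in Theorem \ref{Valdivia-2}) is needed, and why the paper deduces maximal dense-lineability \emph{from} Theorem \ref{Valdivia} via this dimension count rather than the other way around.

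Step (i) also rests on an unproved assertion: that the space $B$ of restrictions to $G$ of functions holomorphic on a neighborhood of $\overline{G}$ is dense in $A^\infty(G)$ for an \emph{arbitrary} regular domain. This is the crux, not a routine application of Runge or Mergelyan: the approximation must hold in all derivatives on every compact subset of $\overline{G}$, and for general regular domains (unbounded, infinitely connected, or with badly behaved boundary) no off-the-shelf Mergelyan-type theorem delivers it. The paper's Theorem \ref{Jordan-X-finitelength}(c) imposes finite length and connectedness of $\C \setminus \overline{G}$ precisely to make an approximation argument of this kind work; the point of Valdivia's theorem is to dispense with those hypotheses by a different route. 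A smaller, fixable omission: exhibiting the linearly independent family $\{e^{nz} f_0\}$ inside $A$ does not by itself give $\aleph_0$-lineability; you must check that every nonzero combination $\varphi f_0$ with $\varphi = \sum c_n e^{nz}$ lies in $H_{we}(G)$, which requires dividing out $\varphi$ near a boundary point avoiding its isolated zeros and uses that $\partial G$ is perfect when $G$ is regular.
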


The same result still holds if we replace \,$A^\infty (G)$ \,by the smaller space
$$
A^\infty_b(G) := \{f \in A^\infty (G): \hbox{ each } f^{(j)} \,\, (j=0,1, \dots ) \,\hbox{ is bounded on } G\},
$$
see \cite{Val2}. Note that, as a consequence of Theorem \ref{Valdivia}, we obtain the following.

\begin{corollary}
If \,$G \subset \C$ \,is a Jordan domain then the family \,$H_e(G) \cap A^\infty (G)$ \,is dense-lineable.
\end{corollary}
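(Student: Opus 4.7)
The plan is to derive this directly from Theorem \ref{Valdivia} by exploiting the two facts about Jordan domains already recorded in the introduction: every Jordan domain is regular, and for a Jordan domain the equality $H_e(G) = H_{we}(G)$ holds. So no new construction is required; the corollary is essentially a translation of Valdivia's statement into the Jordan setting.

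Concretely, I would proceed as follows. First, since $G$ is Jordan, $G$ is regular, and Theorem \ref{Valdivia} applies, furnishing a (nearly-Baire) dense vector subspace $M$ of $A^\infty(G)$ such that $M \subset A^\infty(G)$ and $M \setminus \{0\} \subset H_{we}(G)$. Second, using the Jordan hypothesis on $G$, I invoke the identity $H_{we}(G) = H_e(G)$ noted in the introduction. Combining these two inclusions gives $M \setminus \{0\} \subset H_e(G) \cap A^\infty(G)$ with $M$ dense in $A^\infty(G)$, which is precisely the definition of dense-lineability of $H_e(G) \cap A^\infty(G)$ in $A^\infty(G)$.

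There is essentially no obstacle: the substantive work (construction of the dense subspace consisting, modulo zero, of functions with no extension to a strictly larger domain, and simultaneously lying in $A^\infty(G)$) is already carried out in Theorem \ref{Valdivia}, and the passage from $H_{we}$ to $H_e$ is immediate for Jordan domains. The only point worth flagging explicitly in the write-up is the verification that Jordan implies regular (already remarked in the introduction) so that Theorem \ref{Valdivia} is indeed applicable, and a one-line reminder of why $H_e(G) = H_{we}(G)$ in the Jordan case so that the reader sees where the hypothesis is used.
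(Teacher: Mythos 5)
Your proposal is correct and is exactly the paper's argument: the corollary is stated as an immediate consequence of Theorem \ref{Valdivia}, using that a Jordan domain is regular and that $H_e(G) = H_{we}(G)$ for Jordan domains. No further comment is needed.
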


We have, in addition, the next theorem, whose parts (a) and (b) are showed in
\cite{BerCL}, while part (c) is proved in \cite{BerOrd} by using Theorem \ref{maximal dense-lineable} above.

\begin{theorem} \label{Jordan-X-finitelength}
Assume that $G \subset \C$ is a domain. We have:
\begin{enumerate}
\item[\rm (a)] If \,$G$ is a Jordan domain with analytic boundary then $H_e(G) \cap A^\infty (G)$ is spaceable in $A^\infty (G)$.
\item[\rm (b)] If \,$\partial G$ does not contain isolated points, $X \subset H(G)$ is a vector space with \,$H_e(G) \cap X \ne \emptyset$
\,and there is a nonconstant function \,$\varphi$ \,holomorphic on some domain \,$\Omega \supset \ovl{G}$ \,such that
\,$\varphi X \subset X$, then \,$H_e(G) \cap X$ \,is li\-ne\-a\-ble.
\item[\rm (c)] If \,$G$ \,is a regular finite-length domain such that \,$\C \setminus \ovl{G}$ \,is connected then
\,$H_e(G) \cap A^\infty (G)$ \,is maximal dense-lineable in \,$A^\infty (G)$.
\end{enumerate}
\end{theorem}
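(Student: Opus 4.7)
I plan to prove the three parts separately; parts (a) and (b) use independent ideas, while part (c) combines a multiplicative trick with Theorem \ref{maximal dense-lineable}.

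\emph{Part (b).} Given $f_0 \in H_e(G) \cap X$ (nonempty by hypothesis), I let $M := \mathrm{span}\{\varphi^n f_0 : n \in \N_0\}$. Iterating $\varphi X \subset X$ gives $M \subset X$, and nonconstancy of $\varphi$ on $G$ together with $f_0 \not\equiv 0$ makes $M$ infinite-dimensional. To show $P(\varphi) f_0 \in H_e(G)$ for every nonzero polynomial $P$: if $P(\varphi) f_0$ extended to $\widetilde g$ on a neighborhood $V$ of some $p \in \partial G$, then $P(\varphi)$ (holomorphic on $\Omega \supset \overline G$) would have at most countably many zeros in $V$, while $V \cap \partial G$ is uncountable by the no-isolated-points hypothesis; picking $q \in V \cap \partial G$ with $P(\varphi)(q) \ne 0$, the quotient $\widetilde g / P(\varphi)$ is holomorphic on a small disk $W$ around $q$ and agrees with $f_0$ on any component of $W \cap G$ (by the identity principle), producing a forbidden holomorphic extension of $f_0$ across $q$.

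\emph{Part (c).} I apply Theorem \ref{maximal dense-lineable} with $X := A^\infty(G)$, $A := H_e(G) \cap A^\infty(G)$ and $B := \{P|_G : P \in \C[z]\}$. Since $G$ is finite-length it is bounded, so $B \subset A^\infty(G)$; the regularity of $G$ together with connectedness of $\C \setminus \overline G$ yields density of polynomials in $A^\infty(G)$ (the $C^\infty$-Mergelyan step exploited in the proof of Theorem \ref{Bernal-MCCM-Luh}), so $B$ is dense-lineable. Adding a polynomial preserves both $A^\infty$-membership and non-extendability, giving $A + B \subset A$; and $A \cap B = \emptyset$ because nonzero polynomials extend entire to $\C$. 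The separable Fr\'echet space $A^\infty(G)$ has open basis of cardinality $\mathfrak c = \dim A^\infty(G)$, so it only remains to prove $\mathfrak c$-lineability of $A$. I fix any $f_0 \in A$ (existence via \cite{Chm} or Theorem \ref{Bernal-MCCM-Luh}) and choose $H \subset \R \setminus \{0\}$ with $|H| = \mathfrak c$. Any nonzero finite $\C$-linear combination of $\{e^{rz} f_0 : r \in H\}$ has the form $\varphi \cdot f_0$ with $\varphi \in \mathcal E$; the linear independence of distinct exponentials yields linear independence of the family, and the quotient argument from part (b), now applied to $\varphi f_0$ (using that $\varphi \in \mathcal E$ has only countably many zeros --- as shown inside the proof of Proposition \ref{exponentials} --- and that $\partial G$ has no isolated points, which holds for every regular $G$ by a short connectedness argument), shows $\varphi f_0 \in A$. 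Theorem \ref{maximal dense-lineable} now delivers the dense subspace of dimension $\mathfrak c$ inside $A \cup \{0\}$.

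\emph{Part (a) and main obstacle.} For a Jordan domain $G$ with analytic boundary, the Riemann map $\phi : \D \to G$ extends biholomorphically between open neighborhoods of $\overline\D$ and $\overline G$, so $f \mapsto f \circ \phi^{-1}$ is a topological isomorphism $A^\infty(\D) \to A^\infty(G)$ sending $H_e(\D)$ onto $H_e(G)$; spaceability of $H_e(\D) \cap A^\infty(\D)$ in $A^\infty(\D)$ from \cite{BerS} then transfers directly through $\phi$, proving (a). The main obstacle I anticipate is the $\mathfrak c$-lineability step in part (c): applying Proposition \ref{exponentials} directly to $\mathcal F = A$ would require showing $\varphi \circ f_0 \in H_e(G)$ for every $\varphi \in \mathcal E$, which calls for a delicate monodromy analysis at the critical values of $\varphi$; the multiplicative variant above sidesteps this difficulty via the cleaner quotient argument inherited from part (b).
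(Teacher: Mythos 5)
The paper itself contains no proof of this theorem: parts (a) and (b) are quoted from \cite{BerCL} and part (c) from \cite{BerOrd}, so your proposal can only be measured against those sources and against the internal evidence of the paper. Your architecture does match them: part (c) is indeed obtained in \cite{BerOrd} by feeding Theorem \ref{maximal dense-lineable} with $B=$ the polynomials (dense in $A^\infty (G)$ by the Mergelyan-type step underlying Theorem \ref{Bernal-MCCM-Luh}, which is the genuinely hard ingredient and which you defer entirely to \cite{BerCL}) together with a $\mathfrak{c}$-lineable $A$; and the hypotheses of part (b) are tailored exactly to your construction $M=\mathrm{span}\{\varphi^{n}f_{0}:n\in\N_{0}\}$. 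Your $\mathfrak{c}$-lineability step via $\{e^{rz}f_{0}:r\in H\}$ is a clean multiplicative alternative to the compositional route the paper itself follows later (Lemma \ref{Lemma-composition} plus Proposition \ref{exponentials}), and it legitimately sidesteps the monodromy issue you identify.

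There is, however, one concrete gap in the quotient argument that you reuse in both (b) and (c). If $F:=P(\varphi )f_{0}\notin H_{e}(G)$, the extension $\widetilde{g}$ you obtain on a ball $V$ meeting $\partial G$ agrees with $F$ only on \emph{one} component $A$ of $V\cap G$ (equivalently, only on the disc of convergence of the Taylor series at a point $a$ with $\rho (F,a)>{\rm dist}(a,\partial G)$). Your assertion that $\widetilde{g}/P(\varphi )$ ``agrees with $f_{0}$ on any component of $W\cap G$ by the identity principle'' is false: the identity principle gives nothing on components of $W\cap G$ disjoint from $A$. You must therefore choose the point $q$ with $P(\varphi )(q)\ne 0$ inside $\ovl{A}\cap \partial G$, not merely inside $V\cap \partial G$; otherwise the small disc $W$ around $q$ may fail to meet $A$ and no extension of $f_{0}$ is produced. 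The repair is to observe that $\partial A\cap V$ is a nonempty, relatively closed subset of $\partial G\cap V$ which is perfect precisely because $\partial G$ has no isolated points, hence uncountable, and so cannot be contained in the countable zero set of $P(\varphi )$ (resp.\ of $\varphi \in {\cal E}$ in part (c)). With that correction both (b) and the $\mathfrak{c}$-lineability step in (c) go through. A smaller caveat: your part (a) reduction to the disc via the boundary-extended Riemann map and the spaceability result of \cite{BerS} is sound for bounded $G$, but the paper explicitly allows unbounded Jordan domains, and the conformal transfer of $A^\infty$ degenerates at the boundary point sent to $\infty$, so that case would need a separate treatment.
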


Finally, in the infinite dimensional setting, Alves \cite{Alv} has recently proved the following assertion.

\begin{theorem} \label{Alves}
Suppose that \,$G$ is a domain of existence of a separable complex Banach space $E$. Then \,$H_e(G)$ is $\mathfrak{c}$-lineable
and algebrable.
\end{theorem}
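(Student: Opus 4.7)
The plan is to apply Proposition~\ref{exponentials} with $\Omega=G$ and $\mathcal{F}=H_e(G)\cup\{0\}$: any free algebra of cardinality $\mathfrak{c}$ landing in $\mathcal{F}$ simultaneously delivers $\mathfrak{c}$-lineability and (infinite) algebrability for $H_e(G)$. Since $G$ is a domain of existence, one may fix $f\in H_e(G)$; such an $f$ is automatically nonconstant (constants extend to all of $E$), and by applying the classical one-variable open mapping theorem along any complex line through a point of $G$ on which $f$ is not locally constant, $f(G)$ is open in $\C$, hence uncountable. This meets the first hypothesis of the proposition.

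The crux is the second hypothesis: $\varphi\circ f\in H_e(G)$ for every $\varphi\in\mathcal{E}$. Suppose for contradiction that some $\varphi\in\mathcal{E}$ fails; then there exist domains $G_1,G_2\subset E$ with $G_2\subset G\cap G_1$, $G_1\not\subset G$, and $\widetilde{h}\in H(G_1)$ such that $\widetilde{h}=\varphi\circ f$ on $G_2$. As recorded inside the proof of Proposition~\ref{exponentials}, $\varphi$ is nonconstant entire, so the zeros of $\varphi'$ are isolated and the set $C$ of critical values of $\varphi$ is at most countable. Since $f(G_2)$ is open in $\C$, one can pick $a\in G_2$ with $\varphi'(f(a))\ne 0$; the holomorphic inverse function theorem then furnishes a local inverse $\psi$ of $\varphi$ near $\varphi(f(a))$, and the identity $f=\psi\circ\widetilde{h}$ holds on a neighborhood of $a$ inside $G_2$. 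Setting $B:=\widetilde{h}^{-1}(C)$, which is a countable union of complex codimension-one analytic subvarieties of $G_1$ because $\widetilde{h}$ is nonconstant, the complement $G_1\setminus B$ is open and connected; by connectedness of $G_1$ together with $G_1\not\subset G$, one selects a path $\gamma\subset G_1\setminus B$ from $a$ to a point $b\in G_1\setminus G$. Coherent analytic continuation of the germ $\psi\circ\widetilde{h}$ along $\gamma$, via successive local inverses of $\varphi$ at noncritical values, produces a holomorphic $\widetilde{f}$ defined on an open neighborhood $G_1'\subset G_1$ of (a subarc of) $\gamma$, with $G_1'\not\subset G$ and $\widetilde{f}=f$ on a nonempty open subset of $G\cap G_1'$. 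This contradicts $f\in H_e(G)$, hence $\varphi\circ f\in H_e(G)$.

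With both hypotheses verified, Proposition~\ref{exponentials} supplies a free algebra of cardinality $\mathfrak{c}$ contained in $H_e(G)\cup\{0\}$, generated by $\{\exp\circ(rf):r\in H\}$ for any $\Q$-linearly independent set $H\subset(0,+\infty)$ with $|H|=\mathfrak{c}$; this yields both the $\mathfrak{c}$-lineability and the algebrability stated in Theorem~\ref{Alves}. The principal obstacle lies in the second paragraph: making the coherent lifting of $\psi\circ\widetilde{h}$ along $\gamma$ rigorous and, more delicately, ensuring that $\gamma$ can actually reach a point outside $G$. The difficulty becomes acute when $G_1\subset\overline{G}$, so that $G_1\setminus G\subset\partial G$ may be relatively thin; in that situation one must combine the codimension-one nature of $B$ with the precise relative position of $G$ and $G_1$ to select an admissible endpoint $b$ for $\gamma$.
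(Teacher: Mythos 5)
The paper does not actually prove this statement; it quotes it from Alves \cite{Alv}, so there is no internal proof to compare against. Your strategy --- apply Proposition \ref{exponentials} with $\Omega=G$, ${\cal F}=H_e(G)$ and an arbitrary $f\in H_e(G)$ --- is coherent in outline: the first hypothesis (uncountability of $f(G)$) is correctly verified, and a freely $\mathfrak{c}$-generated algebra inside $H_e(G)\cup\{0\}$ would indeed yield both $\mathfrak{c}$-lineability and algebrability. The entire weight of the argument therefore rests on the claim that $\varphi\circ f\in H_e(G)$ for every $\varphi\in{\cal E}$, and your proof of that claim has genuine gaps.

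First, you delete only $B=\widetilde{h}^{-1}(C)$ with $C$ the set of \emph{critical} values of $\varphi$, and then continue local inverses of $\varphi$ along $\widetilde{h}\circ\gamma$. But local inverses of an entire function continue along a path in the range only when the path avoids all \emph{singular} values, i.e.\ critical values together with asymptotic (in particular omitted) values. For $\varphi=\exp\in{\cal E}$ there are no critical points, so your $B$ is empty, yet there is no local inverse of $\exp$ at $0$ and the lifting of $\log\circ\,\widetilde{h}$ breaks down at every zero of $\widetilde{h}$ on $\gamma$; so the argument as written already fails for the simplest member of ${\cal E}$. One can enlarge $B$ to the preimage of the (still countable) set of singular values, but then the second difficulty --- which you flag yourself without resolving --- becomes unavoidable: you must exhibit a path from $a$ that exits $G$ while staying in $G_1\setminus B$, i.e.\ you must exclude the possibility $G_1\setminus G\subset\widetilde{h}^{-1}(S)$. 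When $G_1\setminus G$ is a thin subset of $\partial G$ this containment is not obviously impossible and needs a separate argument, which is not supplied (and is harder still in an infinite-dimensional $E$). There are also untreated technical points: $B$ is an $F_\sigma$, not closed, so $G_1\setminus B$ need not be open, and the path-connectedness of the complement of countably many analytic hypersurfaces in a Banach-space domain is asserted rather than proved. Note that the paper's own Lemma \ref{Lemma-composition} obtains composition-stability only for $H_{we}(G)$, only under the extra hypotheses $f\in A^1(G)$ and $\partial G$ perfect, and the subsequent Remark shows the conclusion fails without them; this should be read as a warning that the composition step is the real content here and cannot be dispatched by the sketch you give. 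As it stands, the proof is incomplete.
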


In particular, $H_e(G)$ is maximal lineable in $H(G)$. The proof in \cite{Alv} yields in fact the strong algebrability of \,$H_e(G)$.

\section{Main results}

\quad We start with a theorem that, in the case $E = \C$, complements Theorems \ref{AronGarciaMaestre} and \ref{Alves}.
The following algebraic concept is in order. Let $p \in \N_0^p$ and consider the {\it lexicographical order}
``$\le$'' on $\N^p$ defined by: $M := (m_1,...,m_p) \le (j_1,...,j_p) =: J$ if and only if $M=J$ or $M < J$; and $M < J$
if and only if there is $s \in \{1,...,p\}$ such that $m_k = j_k$ for $k \le s-1$ and $m_s < j_s$.
Since it is a total order on $\N_0^p$, every nonempty finite subset $S \subset \N_0^p$ reaches a maximum $R = (r_1,...,r_p)$.
Denote by ${\cal P}_{p,0}$ the family of all nonzero polynomials of $p$ complex variables without constant term.
Given $P \in {\cal P}_{p,0}$, there is a nonempty finite set $S \subset \N_0 \setminus \{(0,...,0)\}$
such that $P(z_1, \dots ,z_p) = \sum_{J \in S} c_J z_1^{j_1} \cdots z_p^{j^p}$ and $c_J \in \C \setminus \{0\}$
for all $J \in S$.
If $R = \max S$ then we say that $c_R z_1^{r_1} \cdots z_p^{r_p}$ is the {\it dominant monomial} of $P$.

\vskip .15cm

As usual, the Euclidean open ball with center $a \in \C$ and radius $r > 0$ will be denoted by $B(a,r)$.
Moreover, $C(A)$ will stand for the set of all continuous functions $A \to \C$, where $A \subset \C$.

\begin{theorem} \label{H_e(G)densely strongly c-algebrable}
For any domain $G \subset \C$, the set $H_e(G)$ is densely strongly $\mathfrak{c}$-algebrable in $H(G)$.
\end{theorem}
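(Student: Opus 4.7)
The plan is to apply Proposition \ref{exponentials} with ${\cal F} = H_e(G)$ to obtain strong $\mathfrak{c}$-algebrability, and then upgrade to density by perturbing the generators with a countable dense family from $H(G)$.

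For the first stage, I would construct $f \in H(G)$ such that $f(G)$ is uncountable and $\varphi \circ f \in H_e(G)$ for every $\varphi \in {\cal E}$. Fix $\{b_n\}_{n \in \N}$ dense in $\partial G$. A Kierst--Szpilrajn-type Baire argument in the Fr\'echet space $H(G)$ produces a residual, hence nonempty, set of $f$'s whose cluster set at each $b_n$ equals $\C_\infty$. For such an $f$, the first paragraph of the proof of Proposition \ref{exponentials} furnishes, for any given $\varphi \in {\cal E}$, a ray along which $|\varphi| \to \infty$; choosing $z_j \to b_n$ with $f(z_j)$ tending to $\infty$ along that ray then gives $|\varphi(f(z_j))| \to \infty$, whence $\varphi \circ f$ is unbounded near a dense subset of $\partial G$ and cannot be extended across any boundary point, that is, $\varphi \circ f \in H_e(G)$. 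Proposition \ref{exponentials} then yields a $\Q$-linearly independent set $H \subset (0, +\infty)$ with ${\rm card}(H) = \mathfrak{c}$ such that $\{e^{rf}: r \in H\}$ freely generates an algebra ${\cal A}_0 \subset H_e(G) \cup \{0\}$.

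For the second (and main) stage, I would use the separability of $H(G)$. Fix a countable dense sequence $\{p_n\}_{n \in \N} \subset H(G)$ and decompose $H = H'' \sqcup \bigsqcup_{n \in \N} H_n'$ with $H_n' = \{r_{n,k}\}_{k \in \N}$ and ${\rm card}(H'') = \mathfrak{c}$. Replace the original generators by
\[
g_{n,k} := p_n + \tfrac{1}{k}\, e^{r_{n,k} f} \quad (n, k \in \N), \qquad g_r := e^{r f} \quad (r \in H'').
\]
Since $g_{n,k} \to p_n$ in $H(G)$ as $k \to \infty$, the set of generators is already dense. Expanding any nonzero polynomial combination of these generators in the monomials $e^{\lambda f}$, where $\lambda$ runs over nonnegative-integer combinations of finitely many elements of $H$, produces distinct exponents (by $\Q$-independence of $H$), so the expansion is nontrivial; the cluster-set argument of Step 1, applied to the monomial with the dominant exponent, then shows that the combination is unbounded near every boundary point, and hence lies in $H_e(G) \cup \{0\}$.

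The principal obstacle is the second stage: simultaneously ensuring that the perturbed family (i) generates a free algebra, (ii) has every nonzero combination inside $H_e(G) \cup \{0\}$, and (iii) is dense in $H(G)$. The technical heart of (ii) is a quantitative cluster-set estimate showing that, along an appropriate ray, the dominant exponential $e^{\lambda f}$ eventually overcomes the polynomial coefficients built from the $p_n$'s, so that non-extendability is preserved under the perturbation.
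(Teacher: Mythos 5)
Your first stage is close in spirit to a workable argument, but it already has a gap that matters for general domains: membership in $H_e(G)$ requires $\rho(F,a)=\mathrm{dist}(a,\partial G)$ for every $a\in G$, and the contradiction one extracts from $\rho(F,a)>r>\mathrm{dist}(a,\partial G)$ is that the Taylor sum is bounded on $B(a,r)$ while agreeing with $F$ only on the \emph{connected component of $B(a,r)\cap G$ containing $a$}. Unboundedness of $\varphi\circ f$ near a dense set of boundary points $b_n$ is therefore not enough: the points $z_j\to b_n$ along which $|f(z_j)|\to\infty$ may approach $b_n$ from the wrong component (this is exactly the $H_e$ versus $H_{we}$ distinction illustrated by $\log z$ on the slit plane). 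The paper's sequence $\{a_n\}$ is built precisely so that, for every $a$ and every $r>\mathrm{dist}(a,\partial G)$, infinitely many $a_n$ lie in the component of $B(a,r)\cap G$ containing $a$; you would need to localize your cluster-set condition in the same way (to prime ends/components), not merely to a dense subset of $\partial G$. Note also that ``cluster set $=\C_\infty$'' only gives $|f(z_j)|\to\infty$ with no control on $\arg f(z_j)$; since an exponential-like $\varphi$ can decay in other directions, you must place $f(z_j)$ on (or uniformly near) the specific ray from the proof of Proposition \ref{exponentials}, which needs an extra argument.

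The second stage is where the proposal genuinely breaks down. After the additive perturbation $g_{n,k}=p_n+\frac1k e^{r_{n,k}f}$, a polynomial combination is no longer of the form $\varphi\circ f$ with $\varphi\in{\cal E}$; it is a sum $\sum_J c_J(z)e^{\lambda_J f(z)}$ whose coefficients $c_J$ are polynomials in the $p_n$, together with a $\lambda=0$ term. The $p_n$ are arbitrary members of a dense sequence in $H(G)$, so near $\partial G$ they can grow faster than any rate fixed in advance, and the dominant coefficient $c_R$ can vanish or tend to $0$ along your sequence $z_j$; nothing ties the growth of $e^{\lambda f}$ at the $z_j$ to the behaviour of the $c_J$ there. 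The ``quantitative cluster-set estimate'' you defer is therefore not a routine technicality but the central difficulty, and I do not see how to supply it in this form (observe also that $H_e(G)+H(G)\not\subset H_e(G)$, so even the single generator $p_n+\frac1k e^{r_{n,k}f}$ is not automatically non-extendable). The paper avoids this competition entirely: it fixes a sequence $\{a_n\}$ accumulating inside every relevant component, realizes each dense generator $f_N$ in one stroke by the Gauthier--Hengartner approximation-plus-interpolation theorem on the Arakelian set $K_N\cup\{a_n\}$ --- so that $f_N$ is within $1/N$ of $g_N$ on $K_N$ (density) \emph{and} takes the prescribed values $e^{n^N}$ at the $a_n\notin K_N$ (blow-up) --- and then runs the dominant-monomial comparison on the explicit constants $e^{r_1 n^{\alpha_1}+\cdots+r_p n^{\alpha_p}}$. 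If you wish to keep the exponential-composition framework of Proposition \ref{exponentials}, you would have to build the density into the generators by interpolation rather than by adding dense functions afterwards.
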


\begin{proof}
We need some topological preparation before constructing an adequate free algebra.
We denote by $G_*$ the one-point compactification of
$G$. Recall that in $G_*$ the whole boundary $\partial G$
collapses to a unique point, say $\omega$. Let us fix an
increasing sequence $\{K_N:\, N \in \N\}$ of compact subsets of
$G$ such that each compact subset of $G$ is contained in some
$K_N$ and each connected component of the complement of every
$K_N$ contains some connected component of the complement of $G$,
see \cite[Chapter 7]{Conway}. Choose a countable dense subset
$\{g_N: \, N \in \N\}$ of the (separable) space $H(G)$.

\vskip .15cm

Select also a sequence $\{a_n: \, n \in \N\}$ of distinct points
of $G$ such that it has no accumulation point in $G$ and each
{\it prime end} (see \cite[Chapter 9]{ColLow}) of $\partial G$ is an
accumulation point of the sequence. More precisely, the sequence
$\{a_n\}_{n \ge 1}$ should have the following property: for every $a \in G$
and every $r >$ dist$(a,\partial G)$, the intersection of
$\{a_n\}_{n \ge 1}$ with the connected component of $B(a,r) \cap G$
containing $a$ is infinite. An example of the required sequence
may be defined as follows. Let $A = \{\alpha_k\}_{k \ge 1}$ be a dense
countable subset of $G$. For each $k \in \N$ choose $b_k \in
\partial G$ such that $|b_k - \alpha_k| =$ dist$(\alpha_k,\partial
G)$. For every $k \in \N$ let $\{a_{k,l}: \, l \in \N\}$ be a
sequence of points of the line interval joining $\alpha_k$ with
the corresponding point $b_k$ such that $|a_{k,l} - b_k| < 1/(k +
l)$ $(k,l \in \N )$. Each one-fold sequence $\{a_n\}$ (without
repetitions) consisting of all distinct points of the set
$\{a_{k,l}: \, k,l \in \N\}$ has the required property.

\vskip .15cm

Fix $N \in \N$. For the set $A_N := K_N \cup
\{a_n: \, n \in \N\}$ we have:
\begin{enumerate}
\item[$\bullet$] The set $A_N$ is closed in $G$ because the set
                 $\{a_n: \, n \in \N\}$ does not cluster in $G$.
\item[$\bullet$] The set $G_* \setminus A_N$ is connected due to
the
                 shape of $K_N$ (recall that in $G_*$ the whole
                 boundary $\partial G$ collapses to $\omega$) and to the denumerability of
                 $\{a_n: \, n \in \N\}$.
\item[$\bullet$] The set $G_* \setminus A_N$ is locally connected
                 at $\omega$, again by the denu\-me\-ra\-bility of
                 $\{a_n: \, n \in \N\}$ and by the fact that one
                 can suppose that neighborhoods of \,$\omega$ \,do not
                 intersect $K_N$.
\end{enumerate}
In other words, each $A_N$ is an Arakelian subset of $G$, see \cite{Gaier1}.
Now, we define a family of functions $B = \{f_\al : \, \al \ge 1\} \subset H(G)$ as follows.
If $\al \not\in \N$, by the Weierstrass interpolation theorem one can select $f_\al \in H(G)$
such that
$$f_\al (a_n) = e^{n^\al} \,\,\, (n=1,2, \dots ),$$
because $\{a_n\}_{n \ge 1}$ lacks accumulation
points in $G$ (see e.g.~\cite[Chapter 13]{Rudin}). Let $\al = N \in \N$. Consider the function $h_N:A_N \to \C$ given by
$$
h_N(z) =  \left\{
\begin{array}{ll}
                 g_N(z)  & \mbox{if } z \in K_N  \\
                 e^{n^N} & \mbox{if } z = a_n \mbox{ and } a_n \not\in K_N.
\end{array} \right.
$$
Note that $h_N \in C(A_N) \cap H(A_N^0)$, $A_N$ is Arakelian and $a_n \in A_N \setminus \ovl{A_N^0}$
whenever \,$a_n \not\in K_N$. Under these conditions, a remarkable approximation-interpolation result due to
Gauthier and Hengartner \cite[Theorem and remark 2 in page 702]{GauH} asserts the existence of
a function $f_N \in H(G)$ such that
$$|f_N(z) - h_N(z)| < {1 \over N} \, \hbox{ for all } z \in A_N, \hbox{ and}$$
$$f_N(a_n) = h_N(a_n) \, \hbox{ for all } n \in \N \, \hbox{ with } a_n \not\in K_N$$
In particular,
$$|f_N(z) - g_N(z)| < 1/N \quad (z \in K_N) \eqno (1)$$
and $f_N(a_n) = e^{n^N}$ provided that $a_n \notin K_N$.
Denote by $\cal A$ the algebra ge\-ne\-ra\-ted by $B$. Since each compact set $K \subset G$ is contained in all $K_N$'s
(except for a finite number of them) and, from (1), we have $\sup_{z \in K} |f_N(z)-g_N(z)| \to 0$ as $N \to \infty$,
the density of $\{g_N\}_{N \ge 1}$ forces $\{f_N\}_{N \ge 1}$ to be dense, so $\cal A$ is dense.

\vskip .15cm

Finally, we prove that \,$\cal A$ \,is freely $\mathfrak{c}$-generated and \,${\cal A} \setminus \{0\} \subset H_e(G)$.
For this, observe that, of course, card$\,[1,+\infty ) = \mathfrak{c}$, and fix $p \in \N$
as well as a polynomial
$$
P(z_1, \dots ,z_p) = \sum_{J \in S} c_J z_1^{j_1} \cdots z_p^{j_p} \in {\cal P}_{p,0},
$$
with its shape described at the beginning of this section.
Let $c_R z_1^{r_1} \cdots z_p^{r_p}$ be its dominant monomial.
Also, let $\al_1, \dots ,\al_p$ be different numbers of $[1,+\infty )$. We can assume
$\al_1 > \al_2 > \cdots > \al_p$. Suppose that $P(f_{\al_1}, \dots ,f_{\al_p}) \equiv 0$. If $N = [\alpha_1]$
(the integer part of $\al_1$) then, since $K_N$ is compact in $G$ and $\{a_n\}_{n \ge 1}$ lacks
accumulation points in $G$, there is $n_0 \in \N$ for which $a_n \notin K_N$ whenever $n \ge n_0$.
Hence $f_{\al_j}(a_n) = e^{n^{\al_j}}$ for all $j \in \{1,...,p\}$ and all $n \ge n_0$.
Now, observe that, for $n \ge n_0$, $P(f_{\al_1}(a_n), \dots ,f_{\al_p}(a_n))$ is a sum of one term of the form
\,$D_n = c_R e^{r_1 n^{\al_1} + \cdots + r_p n^{\al_p}}$
and finitely many terms of the form \,$E_n = c_J e^{j_1 n^{\al_1} + \cdots + j_p n^{\al_p}}$.
The definition of dominant monomial and the assumption $\al_1 > \al_2 > \cdots > \al_p$ \,yield
\,$D_n \to +\infty$ \,and \,$E_n/D_n \to 0$ as $n \to \infty$,
from which one derives
$$
|P(f_{\al_1}(a_n), \dots ,f_{\al_p}(a_n))| \to +\infty \,\,\,\, (n \to \infty ),
$$
that contradicts $P(f_{\al_1}, \dots ,f_{\al_p}) \equiv 0$.
Hence $F := P(f_{\al_1}, \dots ,f_{\al_p}) \not \equiv 0$, which shows that \,$\cal A$ \,is freely generated.

\vskip .15cm

Our remaining task is to demonstrate that $F \in H_e(G)$.
Recall that $|F(a_n)| \to +\infty$ as $n \to \infty$. Assume, by way of contradiction, that $F \notin H_e(G)$.
Then there would exist some point $a \in G$ such that  $\rho (F,a) > {\rm dist} (a,\partial
G)$. Choose $r$ with
dist$(a,\partial G) < r < \rho (f,a)$. By the construction of
$\{a_n: \, n \in \N\}$, we can select a sequence $\{n_1 < n_2
< \cdots \} \subset \N$ for which $a_{n_k} \in G \cap B(a,r)$ $(k
\in \N )$. On the other hand, the sum $S(z)$ of the Taylor series
of $F$ with center $a$ is bounded on $B(a,r)$. But $S = F$ on $G
\cap B(a,r)$, so $S(a_{n_k}) = F(a_{n_k})$ $(k=1,2,...)$, which is absurd because $|F(a_{n_k})| \to +\infty$ as $k \to \infty$.
The proof is finished.
\end{proof}

Next, we extend parts (a) and (c) of Theorem \ref{Jordan-X-finitelength} by showing that the regularity of the domain
is enough to reach the same conclusions for the bigger class $H_{we}(G)$ (or for the same class
$H_e(G)$ if a little more is assumed). This will be carried out in Theorems \ref{H_e(G)Ainfty maximal dense-lineable} and
\ref{H_e(G)Ainfty spaceable}. We assume the Continuum Hypothesis (CH) in the following result.

\begin{theorem} \label{H_e(G)Ainfty maximal dense-lineable}
\begin{enumerate}
\item[\rm (a)] For any regular domain $G \subset \C$, the set \,$H_{we}(G) \cap A^\infty (G)$ \,is maximal dense-lineable in $A^\infty (G)$.
\item[\rm (b)] For any Jordan domain $G \subset \C$, the set \,$H_e(G) \cap A^\infty (G)$ \,is maximal dense-lineable in $A^\infty (G)$.
\end{enumerate}
\end{theorem}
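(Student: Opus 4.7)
The plan is to derive both parts from Valdivia's Theorem \ref{Valdivia}. Part (b) will follow immediately from (a): since every Jordan domain is regular and satisfies $H_e(G) = H_{we}(G)$ (both facts are noted in the introduction), any subspace witnessing the conclusion of (a) for Jordan (hence regular) $G$ simultaneously witnesses the conclusion of (b). So the work reduces to proving (a).

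For (a), I plan to apply Theorem \ref{Valdivia} to obtain a nearly-Baire dense subspace $M \subset A^\infty(G)$ with $M \setminus \{0\} \subset H_{we}(G)$, and then show, invoking CH, that $\dim M = \dim A^\infty(G) = \mathfrak{c}$. The equality $\dim A^\infty(G) = \mathfrak{c}$ is standard for a separable infinite-dimensional Fr\'echet space: separability bounds the cardinality by $\mathfrak{c}$, while completeness together with Baire gives the lower bound $\mathfrak{c}$. For $\dim M$, I will first rule out countable dimension by exploiting the nearly-Baire property: if $\{e_n\}_{n \ge 1}$ were a countable Hamel basis of $M$, the finite-dimensional subspaces $V_n := \mathrm{span}(e_1, \dots, e_n)$ would be closed (finite-dimensional subspaces in the Hausdorff space $M$), balanced, and sum-absorbing (since $V_n + V_n = V_n$); then, by nearly-Baireness, some $V_{n_0}$ would be a neighborhood of $0$ in $M$, hence open, hence (as any open vector subspace is also closed) the whole of the connected space $M$; this would give $M = V_{n_0}$, contradicting the infinite-dimensionality of $M$, which in turn follows from its density in the infinite-dimensional $A^\infty(G)$.

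Once countable dimension is ruled out, CH provides the final push: since $\aleph_0 < \dim M \le \dim A^\infty(G) = \mathfrak{c} = \aleph_1$, we conclude $\dim M = \mathfrak{c}$, so $M$ itself witnesses the desired maximal dense-lineability. The main obstacle is precisely this dimension count. The nearly-Baire property built into Valdivia's construction is what excludes a countable Hamel basis; CH is then the bridge from ``uncountable'' to ``maximal''. Without CH the argument only yields $\aleph_1$-dense-lineability, which need not coincide with maximal dense-lineability in $A^\infty(G)$.
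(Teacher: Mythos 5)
Your proposal is correct and follows essentially the same route as the paper: both invoke Valdivia's Theorem \ref{Valdivia} to obtain a nearly-Baire dense subspace $M$ with $M\setminus\{0\}\subset H_{we}(G)$, rule out $\dim M=\aleph_0$ by writing $M$ as a countable union of finite-dimensional (closed, balanced, sum-absorbing) subspaces and applying the nearly-Baire property, and then use CH to conclude $\dim M=\mathfrak{c}=\dim A^\infty(G)$. Your reduction of (b) to (a) is the logically correct direction (the paper's phrase ``we only must prove (b)'' is evidently a slip for (a)), and your extra justification that $\dim A^\infty(G)=\mathfrak{c}$ is a welcome detail the paper leaves implicit.
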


\begin{proof}
Since (b) is a particular case of (a), we only must prove (b). To this end, and
according to Theorem \ref{Valdivia}, we select a nearly-Baire dense subspace \,$M \subset A^\infty (G)$ \,such
that $M \setminus \{0\} \subset H_e(G)$. Since dim$(A^\infty (G))$ $= \mathfrak{c}$,
it is enough to prove that every infinite dimensional nearly-Baire topological vector space $M$ must satisfy dim$(M) > \aleph_0$.
To do this, assume, by way of contradiction, that dim$(M) = \aleph_0$. Then there would exist a sequence of
vector subspaces
$$
X_1 \subset X_2 \subset \cdots \subset X_n \subset \cdots
$$
such that dim$(X_n) = n$ $(n=1,2,...)$ and $M = \bigcup_{n=1}^\infty X_n$.
As dim$(X_n) < \infty$, we have that each $X_n$ is closed and, trivially, balanced and sum-absorbing.
Consequently, some $X_{m}$ is a neighborhood of $0$, hence $M = X_{m}$, which is absurd because dim$(M) = \infty$.
\end{proof}

In order to face spaceability, the following two assertions (an elementary topological lemma and
a deep interpolation result by Valdivia \cite{Val0,Val2}, resp.) will be needed.
Recall that a topological space is called {\it perfect} whenever it lacks isolated points.

\begin{lemma} \label{LemmaT1perfect}
Assume that \,$X$ is a $T_1$, perfect, second countable topological space. Then from each dense sequence
\,$\{x_n\}_{n \ge 1}$ in \,$X$ one can extract infinitely many sequences \,$\{x_{n(k,j)}\}_{j \ge 1}$
$(k=1,2, \dots )$ \,such that each of them consists of different points and is dense in \,$X$, and they are pairwise disjoint.
\end{lemma}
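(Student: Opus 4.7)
\textbf{Proof plan for Lemma \ref{LemmaT1perfect}.}

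The plan is to run a straightforward diagonal argument against a countable base of $X$. First, I would note that we may assume without loss of generality that the terms of the sequence $\{x_n\}$ are pairwise distinct: after deleting repetitions, the underlying set of values remains dense and can be re-enumerated. Under this reduction, pairwise disjoint subsequences (viewed as index sets) automatically translate into pairwise disjoint sets of values, which is the strongest reading of the conclusion.

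The key topological ingredient is that, since $X$ is $T_1$ and perfect, every nonempty open set $U \subset X$ must be infinite. Indeed, if $U$ were nonempty and finite, one could successively delete single points --- each singleton being closed by $T_1$, each deletion preserving openness --- until reaching a nonempty open singleton, which would furnish an isolated point of $X$ and contradict perfectness. Combining this with the density of $\{x_n\}$, the index set $I(V) := \{n \in \N : x_n \in V\}$ is infinite for every nonempty open $V \subset X$: otherwise, $V \setminus \{x_n : n \in I(V)\}$ would be a nonempty open set disjoint from $\{x_n\}_{n \ge 1}$, violating density.

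With these tools in hand, I fix a countable base $\{U_m\}_{m \ge 1}$ of $X$ together with a bijection $t \mapsto (k_t, m_t)$ of $\N$ onto $\N \times \N$. Construct indices $\{n_t\}_{t \ge 1}$ recursively: given $n_1, \dots, n_{t-1}$, use the infinitude of $I(U_{m_t})$ to pick $n_t \in I(U_{m_t}) \setminus \{n_1, \dots, n_{t-1}\}$. For each $k \ge 1$, let $\{x_{n(k,j)}\}_{j \ge 1}$ be the list of those $x_{n_t}$ with $k_t = k$, enumerated in the natural order of $t$. Since every pair $(k, m)$ occurs as some $(k_t, m_t)$, the $k$-th sequence meets every basic open set $U_m$ and is therefore dense; the indices $n_t$ are pairwise distinct, so each sequence consists of different points and any two sequences for $k \ne k'$ have disjoint index sets --- and hence, by the initial reduction, disjoint sets of values.

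There is no serious obstacle here: the only non-routine point is the preliminary lemma that nonempty open subsets of a $T_1$ perfect space are infinite, and this is elementary. The rest is bookkeeping, organized so that each basic neighborhood gets hit, once, by each of the countably many sequences being extracted.
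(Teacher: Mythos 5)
Your proposal is correct and follows essentially the same route as the paper: the same preliminary observation that nonempty open sets in a $T_1$ perfect space are infinite (proved by the same singleton-deletion argument), followed by the same diagonal selection over an enumeration of $\N\times\N$ against a countable base, choosing at each stage a new term of the sequence in the prescribed basic open set while avoiding the finitely many terms already used. The only cosmetic difference is that you first discard repetitions and track index sets, whereas the paper avoids previously chosen points directly; both yield the same conclusion.
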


\begin{proof}
Every nonempty open subset $U$ of $X$ is infinite. Indeed, if $U$ were finite, say $U = \{y_1, \dots ,y_p\}$
(with the $y_i$'s different), then $V := U \setminus \{y_1\} = \{y_2, \dots , y_p\}$ would be open (because any
singleton $\{y\}$ is closed, since $X$ is $T_1$). Therefore $V \setminus \{y_2\} = \{y_3, \dots ,y_p\}$
is open, and continuing this process we get after a finite number of steps that $\{y_p\}$ is open, which
is absurd because $X$ is perfect. Since $X$ is second countable, there exists a countable open basis $\{U_n\}_{n \ge 1}$.
Hence each member $U_n$ is infinite.

\vskip .15cm

Consider the following strict well-order in $\N \times \N$:
we say that $(l,s) < (k,j)$ if and only if either $l+s < k+l$ or $l+s=k+j$ but $l<k$. Then $(1,1)$ is the
least element of $\N \times \N$ and we have
$(1,1) < (1,2) < (2,1) < (1,3) < (2,2) < (3,1) < (1,4) < \cdots$. Since
$\{x_n\}_{n \ge 1}$ is dense, one can find $n(1,1) \in \N$ with $x_{n(1,1)} \in U_1$. Now, since $U_n \setminus F$
is open and nonempty for every $n$ and every finite set $F \subset X$, we may select, for each
$(k,j) > (1,1)$, an element $x_{n(k,j)} \in U_j \setminus \{x_{n(l,s)}: \, (l,s) < (k,j)\}$.
It is then plain that the sequences $\{x_{n(k,j)}\}_{j \ge 1}$
$(k=1,2, \dots )$ are dense in $X$ and pairwise disjoint, and each of them consists of different points.
\end{proof}

\begin{theorem} \label{Valdivia-2}
Let $G \subset \C$ be a regular domain. Then there is a dense subset $\{z_j: \, j \in \N\}$ in $\partial G$
consisting of different points such that,
for any of its arbitrary subsets $\{u_j: \, j \in \N\}$ and any infinite dimensional triangular matrix
$$
[a_{n+1,j}]_{j \ge n; \, n \in \N_0}
$$
of complex numbers, there is a function $f \in A_b^\infty (G)$ such that
$$
f^{(j)}(u_{n+1}) = a_{n+1,j} \quad (j \ge n; \, n \in \N_0).
$$
\end{theorem}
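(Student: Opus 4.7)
The plan is to build $f$ as a rapidly convergent series $f = \sum_{n \ge 0} g_{n+1}$ in $A_b^\infty (G)$, where each summand $g_{n+1}$ realizes the interpolation data prescribed at $u_{n+1}$ while remaining invisible (that is, vanishing to all orders) at the previously treated points $u_1, \dots, u_n$. First I would fix $\{z_j\}_{j \ge 1}$ as a sequence of distinct points dense in $\partial G$; the regularity of $G$ (that is, $G = \overline{G}^0$) allows such a selection to be made among boundary points admitting a local exterior barrier (e.g., a segment ending at the point whose interior lies in $\C \setminus \overline{G}$), the customary input for controlling boundary behaviour of elements of $A_b^\infty (G)$ via Mergelyan/Arakelyan and Gauthier--Hengartner-type approximation.

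The crucial technical step is a one-point Borel-plus-multi-point-vanishing lemma: given $z_0 \in \{z_j\}$, a finite set $F \subset \{z_j\} \setminus \{z_0\}$, integers $m, L \ge 0$, $\varepsilon > 0$, and any sequence $(c_j)_{j \ge m}$ of complex numbers, there exists $\psi \in A_b^\infty (G)$ such that $\psi^{(j)}(z_0) = c_j$ for every $j \ge m$, $\psi^{(j)}(\zeta) = 0$ for every $\zeta \in F$ and every $j \in \N_0$, and $\sup_{\overline{G}} |\psi^{(j)}| \le \varepsilon$ for every $j \le L$. I would produce $\psi$ as a product $\psi = V \cdot P$, where $V \in A_b^\infty (G)$ vanishes to very high order at each point of $F$ with $V(z_0) \ne 0$ (obtained by approximating a suitable $C^\infty$ model on $\overline{G}$ through the Gauthier--Hengartner theorem), and $P \in A_b^\infty (G)$ is a boundary Borel-type function whose formal Taylor jet at $z_0$ is tuned so that the Leibniz rule yields $(VP)^{(j)}(z_0) = c_j$; this amounts to an invertible triangular linear system for the Taylor coefficients of $P$ at $z_0$, solvable because $V(z_0) \ne 0$, while the seminorm bound $\varepsilon$ is achieved by scaling $V$ down.

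Granting this lemma, the construction of $f$ proceeds inductively. Set $f_0 := 0$ and, assuming $f_n$ has already been built to satisfy $f_n^{(j)}(u_{k+1}) = a_{k+1, j}$ for every $0 \le k < n$ and every $j \ge k$, apply the lemma with $z_0 := u_{n+1}$, $F := \{u_1, \dots, u_n\}$, $m := n$, $L := n$, $\varepsilon := 2^{-n}$, and $c_j := a_{n+1, j} - f_n^{(j)}(u_{n+1})$ to obtain $g_{n+1}$; then put $f_{n+1} := f_n + g_{n+1}$. The earlier rows are preserved because $g_{n+1}$ vanishes to all orders at $u_1, \dots, u_n$, while the prescription at $u_{n+1}$ now holds. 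The bounds $\sup_{\overline{G}} |g_{n+1}^{(j)}| \le 2^{-n}$ for $j \le n$ force the series $f := \sum_{n \ge 0} g_{n+1}$ to converge uniformly on $\overline{G}$ together with all its derivatives, so $f \in A_b^\infty (G)$ and the required identities follow.

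The main obstacle is precisely the lemma: simultaneously prescribing an arbitrary formal Taylor series at one boundary point, enforcing all-order vanishing at finitely many other boundary points, and remaining small in every $C^j$-seminorm on $\overline{G}$, under the sole hypothesis that $G$ is regular. This is far from elementary; it constitutes the technical heart of Valdivia's argument and requires combining refined Gauthier--Hengartner or Arakelyan approximation on the closed subset of $\overline{G}$ obtained by adjoining the selected boundary points with a Whitney--Borel extension delivering arbitrary jets at those points.
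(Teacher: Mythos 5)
First, a point of order: the paper does not prove this statement at all --- Theorem \ref{Valdivia-2} is introduced as ``a deep interpolation result by Valdivia'' and imported from \cite{Val0,Val2}, so there is no internal proof to compare yours against. Your reduction of the theorem to a ``one-point Borel-plus-multi-point-vanishing lemma'' followed by a telescoping series $f=\sum_{n\ge 0}g_{n+1}$ is a reasonable and standard scheme, and the convergence argument is sound once the lemma is granted, modulo one parameter slip: with $m=L=n$ the requirements $\psi^{(n)}(z_0)=c_n$ and $\sup_{\overline{G}}|\psi^{(n)}|\le 2^{-n}$ are incompatible whenever $|c_n|>2^{-n}$, since $c_n=a_{n+1,n}-f_n^{(n)}(u_{n+1})$ is arbitrary. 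You should take $L=n-1$; this still gives, for each fixed $j$, the bound $\sup_{\overline{G}}|g_{n+1}^{(j)}|\le 2^{-n}$ for all $n\ge j+1$, which suffices for convergence in every seminorm of $A_b^\infty(G)$.

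The genuine gap, which you yourself flag, is that the entire analytic content of the theorem is concentrated in the unproven lemma: the existence, for a merely regular domain, of a function of class $A_b^\infty(G)$ whose Taylor jet at one boundary point has an arbitrarily prescribed tail, which is flat to all orders at finitely many other boundary points, and which is small in finitely many $C^j(\overline{G})$-seminorms. Nothing in the proposal explains why such a boundary Borel theorem holds. The Gauthier--Hengartner theorem, as used elsewhere in this paper, interpolates values at points of an Arakelian subset lying \emph{inside} $G$ and not clustering in $G$; it does not prescribe full jets at boundary points. Likewise, a Whitney extension delivers a $C^\infty$ function on $\overline{G}$ with the right jets, but not one holomorphic on $G$, and there is no mechanism offered for correcting it to a holomorphic function without destroying the jets. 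Constructing holomorphic functions with prescribed asymptotic expansions at boundary points of a regular domain --- exploiting that regularity gives $\partial G=\partial\overline{G}$, so each selected $z_j$ can be approached by poles placed in $\C\setminus\overline{G}$ --- is precisely the content of Valdivia's work in \cite{Val0,Val2}, and it is absent here. The proposal is therefore an honest reduction of the theorem to its hard core, not a proof of it.
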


Let \,$\cal P$ \,denote the set of all polynomials in $z$. Of course, ${\cal P} \subset A^\infty (G)$.

\begin{theorem} \label{H_e(G)Ainfty spaceable}
\begin{itemize}
\item[\rm (a)] For any regular domain $G \subset \C$, the set \,$H_{we}(G) \cap A^\infty (G)$ \,is spaceable in $A^\infty (G)$.
\item[\rm (b)] For any Jordan domain $G \subset \C$, the set \,$H_e(G) \cap A^\infty (G)$ \,is spaceable in $A^\infty (G)$.
\end{itemize}
\end{theorem}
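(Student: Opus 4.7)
The plan is to reduce (b) to (a) using that $H_e(G)=H_{we}(G)$ for any Jordan domain $G$ (as already noted in the introduction), and then to build inside the Fr\'echet space $A^\infty(G)$ a closed infinite-dimensional subspace whose non-zero elements lie in $H_{we}(G)$. For (a) the starting observation is that regularity of $G$ forces $\partial G$ to be perfect (an isolated $z_0\in\partial G$ would put $z_0\in\overline{G}^0=G$ or would force a neighborhood of $z_0$ disjoint from $G$, both impossible), so $\partial G$ is a $T_1$, perfect, second countable space. I would take the dense sequence $\{z_j\}_{j\ge 1}\subset\partial G$ provided by Theorem \ref{Valdivia-2} and apply Lemma \ref{LemmaT1perfect} to it, extracting pairwise disjoint subsequences $Z_k=\{z_{n(k,j)}:j\ge 1\}$ ($k\in\N$) each dense in $\partial G$. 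Let $\kappa(m)$ denote the unique $k$ with $z_m\in Z_k$.

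For each $k\in\N$ I would then invoke Theorem \ref{Valdivia-2} with $\{u_n\}=\{z_n\}$ and the triangular array
$$a_{n+1,j}^{(k)}:=\begin{cases}(j!)^2 & \text{if }\kappa(n+1)=k,\\ 0 & \text{otherwise,}\end{cases}\qquad (j\ge n,\ n\in\N_0),$$
obtaining $f_k\in A_b^\infty(G)\subset A^\infty(G)$ with $f_k^{(j)}(z_{n+1})=a_{n+1,j}^{(k)}$. Set $L:=\overline{\mathrm{span}\{f_k:k\in\N\}}$, a closed and hence Fr\'echet subspace of $A^\infty(G)$. For every admissible triple $(k,n,j)$ (i.e.~$z_{n+1}\in Z_k$ and $j\ge n$) the evaluation $\lambda_{k,n,j}(g):=g^{(j)}(z_{n+1})/(j!)^2$ is continuous on $A^\infty(G)$ and $\lambda_{k,n,j}(f_l)=\delta_{kl}$. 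Hence on $\mathrm{span}\{f_l:l\in\N\}$ the value of $\lambda_{k,n,j}$ is the $k$-th coefficient, independently of $(n,j)$; by continuity the restriction $c_k:=\lambda_{k,n,j}|_L$ defines a continuous linear functional on $L$ that does not depend on the choice of admissible $(n,j)$ and satisfies $c_k(f_l)=\delta_{kl}$. Consequently the $f_k$'s are linearly independent and the closed subspace $\mathcal Y:=\bigcap_{k}\ker c_k$ has infinite codimension in $L$ (the induced map $L\to\C^{\N}$, $g\mapsto(c_k(g))_k$, has infinite-dimensional range containing each standard basis vector $e_k$).

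Finally I would apply Theorem \ref{Wilansky-Kalton} to the Fr\'echet space $L$ and its closed infinite-codimension subspace $\mathcal Y$, obtaining a closed infinite-dimensional $M\subset L$ with $M\cap\mathcal Y=\{0\}$. For any $g\in M\setminus\{0\}$ some $c_{k_0}(g)\ne 0$, whence $g^{(j)}(z_{n+1})=c_{k_0}(g)(j!)^2$ for every $z_{n+1}\in Z_{k_0}$ and every $j\ge n$; the Taylor coefficients of $g$ at any such $z_{n+1}$ grow like $|c_{k_0}(g)|\,j!$, so the formal Taylor series of $g$ at each $z_{n+1}\in Z_{k_0}$ has radius of convergence zero. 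An argument in the spirit of the last paragraph of the proof of Theorem \ref{H_e(G)densely strongly c-algebrable} --- any holomorphic extension $\widetilde g\in H(\Omega)$ of $g$ to a domain $\Omega\supsetneq G$ would, by density of $Z_{k_0}$ in $\partial G$, be holomorphic on a neighborhood of some $z_{n+1}\in Z_{k_0}$, forcing positive radius of convergence there --- then rules out any such extension and yields $g\in H_{we}(G)\cap A^\infty(G)$. The main difficulty is producing the coefficient functionals $c_k$ globally on $L$ so that they remain coherent across all admissible $(n,j)$; this is exactly what the disjointness of the $Z_k$'s coming from Lemma \ref{LemmaT1perfect} together with the strict zero-off-diagonal Valdivia interpolation accomplish.
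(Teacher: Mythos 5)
Your proposal is correct, and it rests on exactly the same three pillars as the paper's proof: Valdivia's boundary interpolation (Theorem \ref{Valdivia-2}) performed along the pairwise disjoint dense boundary subsequences supplied by Lemma \ref{LemmaT1perfect}, the radius-of-convergence-zero argument at a dense subset of $\partial G$ to exclude extensions, and finally the Wilansky--Kalton theorem applied inside the closed span of the interpolating functions (the choice of $(j!)^2$ instead of the paper's $j!\,j^j$ is immaterial). The only place where you genuinely diverge is in identifying the closed infinite-codimensional subspace to which Theorem \ref{Wilansky-Kalton} is applied: the paper first shows, via the constants $K_m$ and the Identity Principle, that every extendable element of $X=\overline{M}$ must be a polynomial, and then takes $Y=\overline{{\cal P}\cap X}$; you instead observe that the coefficient functionals $c_k=\lambda_{k,n,j}|_L$ are coherently defined on the closure (two continuous functionals agreeing on the dense subspace ${\rm span}\{f_l\}$ agree on $L$) and take ${\cal Y}=\bigcap_k \ker c_k$ directly. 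This is a small but real simplification --- it bypasses the polynomial detour entirely, since any $g\notin\bigcap_k\ker c_k$ already has a boundary-dense set of points at which its Taylor series has radius of convergence zero --- while the paper's route yields the slightly more precise structural information that the extendable elements of $X$ are exactly the polynomials in $X$. Your explicit verification that regularity of $G$ makes $\partial G$ perfect (needed to invoke Lemma \ref{LemmaT1perfect}) is a welcome detail that the paper leaves implicit.
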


\begin{proof}
Again, it is enough to demonstrate (a).
Consider the sequence $\{z_j: \, j \in \N\} \subset \partial G$ whose existence is guaranteed by Theorem \ref{Valdivia-2}.
By Lemma \ref{LemmaT1perfect}, we can extract pairwise disjoint sequences
$\{z_{n(k,j)}\}_{j \ge 1}$ $(k = 1,2, \dots )$ such that each of them is still dense in $\partial G$.
Let $0! \cdot 0^0 := 1$.
According to Theorem \ref{Valdivia-2}, there exist functions $f_k \in A^\infty (G)$ $(k = 1,2, \dots )$
such that
$$
f_k^{(j)}(z_{n(k,l)}) = j!j^j \hbox{ \ for all \ } j \ge n(k,l)-1 \,\,\, (l \in \N ) \, \hbox{ and } \eqno (2)
$$
$$
f_k^{(j)}(z_{n(s,l)}) = 0 \hbox{ \ for all \ } s \ne k  \hbox{ \ and all \ } j \ge n(s,l)-1 \,\,\,(l \in \N ). \eqno (3)
$$
We have that the functions $f_k$ $(k \ge 1)$ are linearly independent. Indeed, if they were linearly dependent,
there would be $p \in \N$ as well as scalars $c_1, \dots ,c_p$ such that $c_p \ne 0$ and $F := \sum_{k=1}^p c_k f_k = 0$ on $\ovl{G}$.
Therefore $\sum_{k=1}^p c_k f_k^{(j)} = 0$ on $\partial G$ for every $j \ge 0$. In particular, if $N := n(p,1)$, we have
by (2) and (3) that
$$
0 = \sum_{k=0}^p c_k f_k^{(N)} (z_N) =  0 + c_p f_p^{(N)} (z_N) = c_p \, N!N^N,
$$
which is a contradiction.

\vskip .15cm

Let us define
$$
M := \hbox{span}\, \{f_k : \, k=1,2, \dots \}.
$$
Plainly, $M$ is an infinite dimensional vector subspace of $A^\infty (G)$. Fix $F \in M \setminus \{0\}$.
Then $F$ can be written as in the preceding paragraph, $F = \sum_{k=1}^p c_k f_k$, with $c_p \ne 0$.
In particular, by (2) and (3), we get \,
$$
F^{(j)}(z_{n(p,l)}) = c_p f_p^{(j)} (z_{n(p,l)}) = c_p \, j!j^j  \eqno (4)
$$
for every $l \in \N$ and every $j \ge n(p,l)$. Then for the radius of convergence of the associated Taylor series we have
$$
\rho (F,z_{n(p,l)}) = \left[ \limsup_{j \to \infty} \left| {F^{(j)}(z_{n(p,l)}) \over j!} \right|^{1/j} \right]^{-1} = 0
\hbox{ \ for all \ } l \in \N . \eqno (5)
$$
If $F \notin H_{we}(G)$ then there would be an open ball $B$ with $B \cap \partial G \ne \emptyset$ such that
$F$ extends holomorphically on $B$. Due to the density of $\{z_{n(p,l)}\}_{l \ge 1}$, one can select $l \in \N$
with $z_{n(p,l)} \in B$, which is impossible by (5). Thus $M \setminus \{0\} \subset H_{we}(G) \cap A^\infty (G)$.

\vskip .15cm

Now, consider the space
$$
X := \overline{M} = \overline{\hbox{span}} \, \{f_k : \, k=1,2, \dots \},
$$
where the closure is taken in $A^\infty (G)$.
Since $A^\infty (G)$ is a Fr\'echet space, we get that $X$ is a Fr\'echet space under the inherited topology.
Suppose that $F \in X$. Then there exists a sequence $\{F_\nu = \sum_{k=1}^\infty \la_{\nu ,k} f_k\}_{\nu \ge 1} \subset M$
such that, for every $j \in \N$, $F_{\nu}^{(j)} \to F^{(j)}$ $(\nu \to \infty )$ uniformly on every compact
subset of \,$\ovl{G}$; here the coefficients $\la_{\nu,k}$ are complex numbers such that, for every $\nu$,
$\la_{\nu ,k} = 0$ provided that $k$ is large enough. Therefore
\,$\sum_{k=1}^\infty \la_{\nu ,k} f_k^{(j)} (z_{n(m,l)}) \to F^{(j)}(z_{n(m,l)})$ as $\nu \to \infty$,
for all $j \in \N_0$ and all $m,l \in \N$. From (2) and (3), we get
$$
\la_{\nu ,m} j!j^j \to F^{(j)}(z_{n(m,l)}) \,\,\, (\nu \to \infty ) \,\,\, \hbox{for all } m,l \in \N \hbox{ and all } j \ge n(m,l).
$$
Then $\la_{\nu ,m} \to F^{(j)}(z_{n(m,l)})/j!j^j$ as $\nu \to \infty$. Now the uniqueness of the limit
leads us to the existence, for each $m \in \N$, of a constant $K_m \in \C$ such that
$$
F^{(j)}(z_{n(m,l)})= K_m \, j!j^j \, \hbox{ provided that } j \ge n(m,l). \eqno (6)
$$
Assume now that $F \in X \setminus H_{we}(G)$. Then we have again at our disposal an open ball $B$ with
$B \cap \partial G \ne \emptyset$ such that $F$ extends holomorphically on $B \cup G$.
Fix any $m \in \N$. If $K_m \ne 0$
then (6) entails that $\rho (F,z_{n(m,l)}) = 0$ for each $l$, which is impossible because, by density,
there is $l$ with $z_{n(m,l)} \in B$. Consequently, $K_m = 0$ and, again by the density of
$\{z_{n(m,l)}\}_{l \ge 1}$, (6) implies that $F^{(j)}(w) = 0$ (if $j$ is large enough) for at least one point $w \in B$.
The Identity Principle tells us that $F$ is a polynomial. Hence $X \setminus H_e(G) \subset {\cal P}$.
Let $Y := \ovl{{\cal P} \cap X}$, which is a closed linear subspace of $X$. Observe that
$$
H_e(G) \cap A^\infty (G) \supset X \setminus {\cal P} = X \setminus ({\cal P} \cap X) \supset X \setminus Y. \eqno (7)
$$
Suppose that $F \in {\cal P} \cap X$. Then (6) implies that the corresponding constants $K_m$ are all $0$, so
$$
F^{(j)}(z_{n(m,l)})= 0 \,\, \hbox{ for all } j \ge n(m,l) \,\,\,\, (m,l=1,2, \dots ) \eqno (8)
$$
If now $F \in Y$ then $F$ is a limit in $A^\infty (G)$ of a sequence of
functions each of them satisfying (8). Hence $F$ also satisfies
(8), which is inconsistent with (4) if, in addition, $F \in M$, except that $F = 0$. In other words,
$Y \cap M = \{0\}$. But $M$ is an infinite dimensional vector space contained in $X$. Thus $Y$ has infinite
codimension in $X$. From Wilansky--Kalton's Theorem \ref{Wilansky-Kalton} one derives that $X \setminus Y$
is spaceable in $X$. Since $X$ is closed in $A^\infty (G)$, a subset of $X$ is closed in $X$ if and only if
it is closed in $A^\infty (G)$. This fact and (7) entail the desired spaceability of $H_e(G) \cap A^\infty (G)$
in $A^\infty (G)$.
\end{proof}

\begin{remark}
{\rm Observe that, by using Theorem \ref{Valdivia-2}, the same proof above works to show
the spaceability of \,$H_{we}(G) \cap A_b^\infty (G)$ \,in
\,$A_b^\infty (G)$, whenever \,$G \subset \C$ \,is regular.}
\end{remark}

Algebrability of $H_{we}(G)$ inside $A^\infty (G)$ can also be asserted.
This complements the final part of Theorem \ref{AronGarciaMaestre} and
will be shown in Theorem \ref{H_e(G)Ainfty algebrable} below, but prior to it we
need the next auxiliary statement, which is probably well known. For the sake of completeness,
we provide an elementary proof of this statement. By \,$A^1(G)$ \,it is denoted the space
of functions \,$f \in H(G)$ \,such that $f$ and $f'$ extend continuously to \,$\ovl{G}$.

\begin{lemma} \label{Lemma-composition}
Suppose that $G \subset \C$ is a domain such that
\,$\partial G$ does not contain isolated points. Let \,$f \in H_{we}(G) \cap A^1(G)$ \,and \,$\varphi$
be a nonconstant entire function. Then \,$\varphi \circ f \in H_{we}(G)$.
\end{lemma}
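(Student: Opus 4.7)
The plan is to argue by contradiction. Suppose $\varphi \circ f \notin H_{we}(G)$, so that $\varphi \circ f$ extends holomorphically to some domain $\widetilde G \supsetneq G$ via an extension $g \in H(\widetilde G)$. The goal is to use $g$ to construct a holomorphic extension of $f$ itself across some boundary point, contradicting $f \in H_{we}(G)$. By connectedness of $\widetilde G$ there exists a point $z_0 \in \widetilde G \cap \partial G$, and I fix once and for all an open disk $D$ with $z_0 \in D \subset \widetilde G$ on which to work.

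The first substantive step is to locate a point $z_1 \in D \cap \partial G$ at which $\varphi'(\tilde f(z_1)) \ne 0$, where $\tilde f$ is the continuous extension of $f$ to $\ovl G$ guaranteed by $f \in A^1(G)$. Since $f$ cannot be constant (a constant function would extend to $\C \supsetneq G$), $\varphi \circ f$ is nonconstant on $G$ (its fibers $\varphi^{-1}(c)$ are discrete while $f(G)$ is connected), so $g$ is nonconstant on $D$ and the zero set $Z(g') \cap D$ is discrete in $D$. On the other hand, $D \cap \partial G$ has no isolated points, because an isolated point of $D \cap \partial G$ would, after shrinking the disk, be an isolated point of $\partial G$. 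A set without isolated points cannot be contained in a discrete set, so there must exist $z_1 \in D \cap \partial G$ with $g'(z_1) \ne 0$.

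Next I would pass from $g'(z_1) \ne 0$ to $\varphi'(\tilde f(z_1)) \ne 0$ by invoking the chain rule on $D \cap G$, where $g' = \varphi'(f) \cdot f'$. The left-hand side is continuous on all of $D$ (as $g \in H(D)$), and the right-hand side extends continuously to $D \cap \ovl G$ because $\tilde f$ and $\tilde f'$ do (this is precisely where the $A^1$-regularity enters). Since $D \cap G$ is dense in $D \cap \ovl G$, the identity $g'(z) = \varphi'(\tilde f(z)) \cdot \tilde f'(z)$ persists throughout $D \cap \ovl G$, and evaluating at $z_1$ gives $\varphi'(\tilde f(z_1)) \ne 0$.

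Setting $w_1 := \tilde f(z_1)$, I choose a neighborhood $V$ of $w_1$ on which $\varphi$ restricts to a biholomorphism onto $W := \varphi(V)$, with holomorphic inverse $\psi$. By continuity of $\tilde f$ and $g$ at $z_1$, an open disk $D_1 \subset D$ around $z_1$ can be chosen small enough that $\tilde f(D_1 \cap \ovl G) \subset V$ and $g(D_1) \subset W$. Then $\psi \circ g \in H(D_1)$, and for every $z \in D_1 \cap G$ one has $\psi(g(z)) = \psi(\varphi(f(z))) = f(z)$. Gluing $f$ on $G$ with $\psi \circ g$ on $D_1$ therefore gives a well-defined holomorphic extension of $f$ to the strictly larger domain $G \cup D_1$ (strictly larger since $z_1 \in D_1 \setminus G$), contradicting $f \in H_{we}(G)$. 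The main obstacle in this plan is the middle step of locating $z_1$: this is precisely where both hypotheses are used essentially, the absence of isolated points in $\partial G$ supplying enough boundary points inside $D$, and the $A^1$-regularity providing the continuous identity that transports the nonvanishing of $g'$ on the boundary into nonvanishing of $\varphi' \circ \tilde f$.
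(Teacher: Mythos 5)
Your proof is correct and follows essentially the same route as the paper's: locate a boundary point $z_1$ where the holomorphic extension of $\varphi\circ f$ has nonvanishing derivative (using the perfectness of $\partial G$ together with the identity principle), transfer this to $\varphi'(\tilde f(z_1))\ne 0$ via the chain rule and the $A^1$-hypothesis, and locally invert $\varphi$ to glue an extension of $f$ across $z_1$. The only cosmetic difference is that the paper also applies the Local Representation Theorem to the extension $\widetilde{F}$ itself before composing with $\varphi^{-1}$, a step that your direct composition $\psi\circ g$ on a sufficiently small disk renders unnecessary.
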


\begin{proof}
Assume, by way of contradiction, that \,$F := \varphi \circ f \not\in H_{we}(G)$.
Then there is an open ball $B$ centered at some point $z_0 \in \partial G$ as well as a
function $\widetilde{F} \in H(G \cup B)$ such that $\widetilde{F} = F$ on $G$.
Let $B_1 \subset B$ be any closed ball centered at $z_0$.
If $\widetilde{F}'(z) = 0$ for all $z \in  B \cap \partial G$ then, since $\partial G$ is perfect,
we would have \,$\widetilde{F}'= 0$ \,on a subset of $B$ having some accumulation point in $B$
(namely, on $B_1 \cap \partial G$). By the Analytic Continuation Principle, $\widetilde{F}'= 0$ on $B$,
hence $\widetilde{F}$ is constant on $B$. By the same Principle, and since $\widetilde{F}=F$ on $G$,
we get $F$ = constant in $G$. Since $f \in H_{we}(G)$, $f$ is not constant, so $f(G)$ is open by the Open Mapping Theorem for
analytic functions. Therefore $\varphi$ is constant on the open set $f(G)$, and a third application of the Analytic
Continuation Principle yields $\varphi$ = constant, which is absurd. Then there must be $z_1 \in B \cap \partial G$
with $\widetilde{F}'(z_1) \ne 0$. From the Local Representation Theorem (see e.g.~\cite{Alf}) we derive
the existence of an open ball $B_2 \subset B$ centered at $z_1$ and of a domain $W$ with $W \ni \widetilde{F}(z_1) = \varphi (f(z_1))$
(recall that $f$ extends continuously to $\partial G$) such that $\widetilde{F} : B_2 \to W$ is bijective.
In particular, $0 \ne \widetilde{F}'(z_1) = \varphi ' (f(z_1)) f'(z_1)$, where in the last equality the hypothesis
$f \in A^1(G)$ has been used. Thus $\varphi ' (f(z_1)) \ne 0$ and, again by the Local Representation Theorem,
there are an open ball $B_0$ centered at $f(z_1)$ and a domain $V$ with $\varphi (f(z_1)) \in V \subset W$
such that $\varphi : B_0 \to V$ is bijective. Then $(\widetilde{F}|_W)^{-1}(V)$ is a domain satisfying
$z_1 \in (\widetilde{F}|_W)^{-1}(V) \subset B$. Consequently, $G \cup (\widetilde{F}|_W)^{-1}(V)$
is a domain containing \,$G$ \,strictly
and the function $\widetilde{f}: G \cup (\widetilde{F}|_W)^{-1}(V) \to \C$ given by
$$
\widetilde{f} (z) = \left\{
\begin{array}{ll}
f(z) & \mbox{if } z \in G \\
\varphi^{-1}(\widetilde{F}(z)) & \mbox{if } z \in (\widetilde{F}|_W)^{-1}(V)
\end{array}
\right.
$$
is well defined, holomorphic and extends $f$. This contradicts our assumption that $f \in H_{we}(G)$ and the proof is finished.
\end{proof}

\begin{remark}
{\rm The conclusion of the last lemma fails if no condition is imposed on $f$. For instance,
if \,$G = \C \setminus (-\infty ,0]$, $f$ is the principal branch of \,$\log z$ \,and
\,$\varphi = \exp$, then \,$f \in H_{we}(G)$ \,but \,$\varphi \circ f =$ Identity $\not\in H_{we}(G)$.}
\end{remark}

\begin{theorem} \label{H_e(G)Ainfty algebrable}
\begin{itemize}
\item[\rm (a)] Let \,$G \subset \C$ \,be a domain such that \,$\partial G$ \,lacks isolated points, and let
\,$X$ \,be an algebra of functions with \,$X \subset A^1(G)$
\,that is stable under composition with entire functions, that is,
$$
f \in X \hbox{ \ and \ } \varphi \in H(\C ) \,\, \Longrightarrow \,\, \varphi \circ f \in X.
$$
Then the family \,$H_{we}(G) \cap \,X$ \,is either empty or strongly $\mathfrak{c}$-algebrable.
\item[\rm (b)] For any regular domain $G \subset \C$, the set $H_{we}(G) \cap A^\infty (G)$ is strongly $\mathfrak{c}$-algebrable.
For any Jordan domain $G \subset \C$, the set $H_{e}(G) \cap A^\infty (G)$ is strongly $\mathfrak{c}$-algebrable.
\end{itemize}
\end{theorem}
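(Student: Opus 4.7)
The plan is to prove part (a) directly by applying Proposition \ref{exponentials}, and then reduce part (b) to part (a). For part (a), suppose $H_{we}(G) \cap X$ is nonempty and fix any $f_0$ in it. I would apply Proposition \ref{exponentials} with $\Omega = G$ and $\mathcal{F} := H_{we}(G) \cap X$. Two hypotheses must be verified. First, $f_0(G)$ is uncountable: since $f_0 \in H_{we}(G)$ cannot be constant (a constant would extend holomorphically to $\C$), the Open Mapping Theorem forces $f_0(G)$ to be a nonempty open subset of $\C$. Second, $\varphi \circ f_0 \in \mathcal{F}$ for every $\varphi \in \mathcal{E}$. The stability assumption on $X$ gives $\varphi \circ f_0 \in X$ directly. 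To place $\varphi \circ f_0$ in $H_{we}(G)$, I would invoke Lemma \ref{Lemma-composition}, whose hypotheses are all present: the containment $X \subset A^1(G)$ puts $f_0$ in $A^1(G)$; $\partial G$ lacks isolated points by hypothesis; and every $\varphi \in \mathcal{E}$ is nonconstant entire (for $m=1$ this is immediate from $a_1, b_1 \ne 0$, and for $m \ge 2$ the computation in the proof of Proposition \ref{exponentials} shows $\varphi$ is unbounded on a suitable ray). Proposition \ref{exponentials} then yields a free algebra on $\mathfrak{c}$ generators inside $\mathcal{F} \cup \{0\}$.

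For the regular case of part (b), the plan is to apply (a) with $X = A^\infty(G)$, which reduces to verifying the four hypotheses. The fact that $A^\infty(G)$ is an algebra contained in $A^1(G)$ is immediate from the Leibniz rule. Stability under composition with entire functions is the one nontrivial piece: for $f \in A^\infty(G)$ and $\varphi \in H(\C)$, Fa\`a di Bruno's formula writes $(\varphi \circ f)^{(j)}$ as a universal polynomial in $f', \dots, f^{(j)}$ with coefficients of the form $\varphi^{(k)}(f)$; each such factor extends continuously to $\overline{G}$, so $\varphi \circ f \in A^\infty(G)$. That regularity of $G$ excludes isolated boundary points follows from a short dichotomy: if $p \in \partial G$ were isolated, a punctured disc about $p$ would lie either entirely in $G$ (forcing $p \in \overline{G}^\circ = G$, a contradiction) or entirely in $\C \setminus \overline{G}$ (contradicting the fact that $p$ is a limit point of $G$). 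Nonemptiness of $H_{we}(G) \cap A^\infty(G)$ is granted for free by Theorem \ref{Valdivia}. The Jordan case then drops out as a corollary: a Jordan domain is regular and satisfies $H_e(G) = H_{we}(G)$ (recorded in the introduction), so $H_e(G) \cap A^\infty(G)$ coincides with $H_{we}(G) \cap A^\infty(G)$ and is strongly $\mathfrak{c}$-algebrable by the regular case.

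The main obstacle is conceptual rather than computational: the stability hypothesis on $X$ in (a) is custom-tailored so that $\mathcal{E}$ can be fed into Proposition \ref{exponentials}, while Lemma \ref{Lemma-composition} is what lifts the non-extendability from $f_0$ to $\varphi \circ f_0$. The only genuinely fiddly verification is the closure of $A^\infty(G)$ under composition with entire functions via Fa\`a di Bruno; beyond that, everything fits together essentially by bookkeeping.
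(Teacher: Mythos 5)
Your proposal is correct and follows essentially the same route as the paper: part (b) is reduced to part (a) by checking that $X = A^\infty(G)$ satisfies the hypotheses, and part (a) is obtained by feeding $\Omega = G$, ${\cal F} = H_{we}(G)\cap X$ into Proposition \ref{exponentials}, with Lemma \ref{Lemma-composition} supplying $\varphi\circ f_0\in H_{we}(G)$ and the Open Mapping Theorem supplying uncountability of $f_0(G)$. The only difference is that you spell out verifications (Fa\`a di Bruno for stability of $A^\infty(G)$, the dichotomy ruling out isolated boundary points) that the paper declares evident.
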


\begin{proof}
Evidently, (b) is a special case of (a), because \,$A^\infty (G)$ \,is an algebra contained in \,$A^1(G)$ \,that is stable
under composition with members of \,$H(\C )$ and, $G$ being regular, we have \,$H_{we}(G) \cap A^\infty (G) \ne \emptyset$
\,and \,$\partial G$ \,lacks isolated points. Therefore, our goal is to prove (a).

\vskip .15cm

To this end, suppose that \,$f \in H_{we}(G) \cap \,X$. Let $\Omega := G$, ${\cal F} :=
H_{we}(G) \cap \,X$. Since $f$ is nonconstant, the set $f(\Omega )$ is open, so uncountable. According to our assumptions and Lemma \ref{Lemma-composition}, we have that \,$\varphi \circ f \in {\cal F}$ \,for every $\varphi \in {\cal E}$, the family of
exponential-like functions. Finally, thanks to Proposition \ref{exponentials}, the set \,${\cal F}$ \,is strongly $\mathfrak{c}$-algebrable,
as desired.
\end{proof}

\begin{remark}
{\rm We have used several times the fact that \,$H_e(G) = H_{we}(G)$ \,if $G$ is a Jordan domain.
More generally, it is easy to see that \,$H_e(G) = H_{we}(G)$ \,if the domain \,$G$ \,satisfies the following property:
{\it
\begin{enumerate}
\item[\rm (P)] For every open ball \,$B$ \,with \,$B \not\subset G$ \,and \,$B \cap G \ne \emptyset$, and every connected component
\,$A$ of \,$B \cap G$, there exists an open ball \,$B_0$ \,satisfying \,$B \supset B_0 \not \subset G$ \,and \,$A \supset B_0 \cap G \ne \emptyset$.
\end{enumerate}}

\noindent Then we can replace ``For any Jordan domain'' by ``For any regular domain satisfying (P)'' in part (b) of Theorems \ref{H_e(G)Ainfty maximal dense-lineable}, \ref{H_e(G)Ainfty spaceable} and \ref{H_e(G)Ainfty algebrable}. Note that, for instance,
\,$\Omega_1 := \{z: \, |z-1| < 1$ and $|z-(1/2)| > 1/2\}$ \,and
\,$\Omega_2 := \{z=x+iy: \, x < 0 \hbox{ or } [x = 0 \hbox{ and } y < 0] \hbox{ or } [x > 0 \hbox{ and } y < 1 + \sin {1 \over x}]\}$
are regular non-Jordan domains such that (P) holds for \,$\Omega_1$ \,but not for \,$\Omega_2$.}
\end{remark}

Our next result establishes, for an arbitrary complex Banach space, that certain ``good shape'' of
the domain guarantees the {\it dense} lineability.
This complements Theorem \ref{Alves}.
Recall that in $H(G)$ we are considering the to\-po\-lo\-gy of uniform convergence in compacta
(see \cite{Chae} or \cite{Din} for a description of this topology and other ones if $E$ is infinite dimensional).
Recall also that a subset $A$ of a vector space is called {\it balanced} provided that
$\la x \in A$ whenever $x \in A$ and $|\la | \le 1$.

\begin{theorem} \label{H_e(G)in infinite dimension}
Suppose that \,$G$ is a domain of existence of a separable complex Banach space $E$
such that there is $x_0 \in G$ such that $G - x_0$ is balanced.
Then \,$H_e(G)$ is maximal dense-lineable in \,$H(G)$.
\end{theorem}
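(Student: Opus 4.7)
The plan is to invoke Theorem \ref{maximal dense-lineable} with $\al = \mathfrak{c}$. Since the translation $f \mapsto f(\cdot + x_0)$ is a linear homeomorphism of $H(G)$ onto $H(G-x_0)$ that sends $H_e(G)$ to $H_e(G-x_0)$ and polynomials to polynomials, I may assume without loss of generality that $x_0 = 0$ and that $G$ itself is a balanced open subset of $E$. I then set $A := H_e(G)$ and let $B \subset H(G)$ denote the vector subspace formed by restrictions to $G$ of continuous polynomials $E \to \C$.

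Next I would verify the four ingredients of Theorem \ref{maximal dense-lineable}. First, $A$ is $\mathfrak{c}$-lineable by Theorem \ref{Alves}. Second, $B$ is dense-lineable in $H(G)$: since $G$ is balanced, each $f \in H(G)$ equals the sum of its Taylor series $f = \sum_{n \ge 0} P_n f$ at the origin, with $P_n f$ a continuous $n$-homogeneous polynomial and uniform convergence on compacta of $G$ (see \cite{Chae}); hence the partial sums lie in $B$, and $B$ is itself a dense infinite-dimensional vector subspace of $H(G)$. Third, $A + B \subset A$, because if $\widetilde{f+P} \in H(G_1)$ were an extension of $f+P$ with $G_1 \not\subset G$, then $\widetilde{f+P} - P \in H(G_1)$ would be a forbidden extension of $f$. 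Fourth, $A \cap B = \emptyset$, since $G \ne E$ and every polynomial on $E$ extends holomorphically to all of $E$.

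Finally, I need to exhibit an open basis $\mathcal{B}$ of $H(G)$ with $\mathrm{card}(\mathcal{B}) \le \mathfrak{c}$. Separability of $E$ provides a countable dense subset of $G$, and each $f \in H(G)$ is determined by its values there, so $|H(G)| \le \mathfrak{c}$; similarly, every compact $K \subset G$ is separable and hence determined by a countable subset of $E$, giving at most $|E|^{\aleph_0} = \mathfrak{c}$ such compacta. The sets $\{f \in H(G) : \sup_K |f - g| < \ve\}$, with $g \in H(G)$, $K \subset G$ compact and $\ve \in \Q \cap (0,+\infty )$, together with their finite intersections, thus form an open basis of cardinality $\le \mathfrak{c}$. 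Since (a) gives $\dim H(G) \ge \mathfrak{c}$ while $|H(G)| \le \mathfrak{c}$ gives $\dim H(G) \le \mathfrak{c}$, we have $\dim H(G) = \mathfrak{c}$, and Theorem \ref{maximal dense-lineable} supplies a dense vector subspace $D \subset H_e(G) \cup \{0\}$ with $\dim D = \mathfrak{c} = \dim H(G)$, which is precisely maximal dense-lineability. The main obstacle is this cardinality count: because $E$ may be infinite-dimensional, $H(G)$ need not be metrizable and has no canonical fundamental sequence of compact subsets, so the number of compacta must be bounded indirectly via the separability they inherit from $E$.
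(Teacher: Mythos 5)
Your proposal is correct and follows essentially the same route as the paper: reduce to the balanced case by translation, take $B$ to be the restrictions of continuous polynomials (dense by the Taylor expansion at the origin), check $A+B\subset A$ and $A\cap B=\emptyset$, bound the cardinality of an open basis by $\mathfrak{c}$ via separability of the compacta, and apply Theorem \ref{maximal dense-lineable} with $\al=\mathfrak{c}$ together with the $\mathfrak{c}$-lineability from Theorem \ref{Alves}. The only cosmetic slip is the reference to ``(a)'' for the lower bound $\dim H(G)\ge\mathfrak{c}$, which should just point to the $\mathfrak{c}$-lineability of $H_e(G)$.
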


\begin{proof}
Since \,$f \in H_e(G) =: A$ \,if and only if \,$f(\cdot + x_0) \in H_e(G - x_0)$, we can suppose that $x_0 = 0 \in G$ and $G$ is balanced.
Under this assumption, the Taylor series centered at $0$ of each $f \in H(G)$ converges to $f$ uniformly on compacta
(see \cite[Proposition 3.36]{Din}). Consequently, the set \,$B$ \,of the restrictions to \,$G$ \,of the (continuous) polynomials in \,$E$
\,is dense in \,$H(G)$. Hence \,$B$ \,is dense-lineable, because it is a vector space.
Notice that, since $G$ is separable (because $E$ is), the space $C(G)$ of complex continuous functions on $G$ has cardinality $\mathfrak{c}$ which,
together with the fact $C(E) \supset H(G) =: X$, implies ${\rm dim} (X) = \mathfrak{c} = {\rm card}\,(X)$.

\vskip .15cm

On one hand, we have by Theorem \ref{Alves} that \,$A$ \,is $\mathfrak{c}$-lineable.
On the other hand, it is evident that if \,$f \in H_e(G)$ \,and \,$g \in H(E)$ \,then \,$f + g \in H_e(G)$.
In particular, $A + B \subset A$. Trivially, $A \cap B = \emptyset$. Now, observe that the collection
\,${\cal B}$ \,of all sets of the form
$$
V(f,K,\ve ) = \{g \in X: \, f \in X, \, \ve > 0, \,K \hbox{ compact } \subset G\}
$$
\,is an open basis for the topology of \,$H(G)$. Note also that,
as $G$ is separable, every compact subset $K$ is closed and separable, so $K$ is the closure of some countable subset
of $G$. Since ${\rm card}\,(G) = \mathfrak{c}$, the collection of the countable subsets of $G$ has cardinality $\mathfrak{c}$,
and therefore the same holds for the collection \,$\cal K$ \,of all compact subsets of $G$. Hence \,${\rm card}\,(X) = \mathfrak{c} =
{\rm card}\,(0,+\infty ) = {\rm card}\,({\cal K})$, so \,${\rm card}\,({\cal B}) = \mathfrak{c}$.
An application of Theorem \ref{maximal dense-lineable} (with $\al = \mathfrak{c}$) concludes the proof.
\end{proof}

\noindent {\bf Question.} Let $E$ be a separable complex Banach space.
Under appropriate conditions, is \,$H_e(G)$ {\it densely} \,algebrable in \,$H(G)$?
Is it {\it $\mathfrak{c}$-algebrable}\,?

\vskip .15cm

We finish this paper by establishing that, for any {\it finite dimensional} \,domain of existence, the conclusion of
Theorem \ref{H_e(G)in infinite dimension} always holds and the second part of the last question
has always a positive answer. This complements Theorem \ref{AronGarciaMaestre}. In the second part, the CH will be assumed.

\begin{theorem}
Let $N \in \N$ and $G \subset \C^N$ be a domain of existence. Then \,$H_e(G)$ \,is maximal dense-lineable in \,$H(G)$
\,and $\mathfrak{c}$-algebrable.
\end{theorem}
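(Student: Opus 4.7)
I would adapt the construction in the proof of Theorem \ref{H_e(G)densely strongly c-algebrable} to $\C^N$, in fact establishing the stronger conclusion that $H_e(G)$ is densely strongly $\mathfrak{c}$-algebrable in $H(G)$, which at once yields both maximal dense-lineability and $\mathfrak{c}$-algebrability. By Cartan--Thullen, $G$ is a Stein manifold, so the several complex variables machinery (holomorphically convex exhaustions, Cartan's theorems $A$ and $B$, Oka--Weil approximation, H\"ormander's $\bar\partial$-estimates) is at our disposal.

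The setup mirrors the one-variable case: first I fix a countable dense family $\{g_N\}_{N\in\N}\subset H(G)$ (using separability of the Fr\'echet space $H(G)$), an exhaustion $\{K_N\}$ of $G$ by $H(G)$-holomorphically convex compacta, and a discrete sequence $\{a_n\}\subset G$ with no accumulation in $G$ but dense near the boundary in the sense that for every $a\in G$ and every $r>\mathrm{dist}(a,\partial G)$ the connected component of $B(a,r)\cap G$ containing $a$ meets $\{a_n\}$ infinitely often; such a sequence can be built via line segments approaching accessible boundary points, exactly as in the $N=1$ case. For each real $\alpha\ge 1$ I then produce $f_\alpha\in H(G)$: for $\alpha\notin\N$ by Weierstrass interpolation on the Stein manifold $G$ with $f_\alpha(a_n)=e^{n^\alpha}$ (existence follows from Cartan's theorem $B$ applied to the ideal sheaf of $\{a_n\}$), and for $\alpha=N\in\N$ by a combined approximation--interpolation statement giving $\|f_N-g_N\|_{K_N}<1/N$ together with $f_N(a_n)=e^{n^N}$ for every $a_n\notin K_N$. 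This combined statement, which plays the role of Gauthier--Hengartner in one variable, follows from Oka--Weil approximation of $g_N$ on $K_N$ corrected by an element of the ideal sheaf of $\{a_n:a_n\notin K_N\}$ that is made uniformly small on $K_N$ by solving a suitable smoothed $\bar\partial$-equation with H\"ormander $L^2$ estimates on the Stein manifold $G$.

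With the $f_\alpha$'s in hand, the verification that the generated algebra $\mathcal{A}$ is dense, freely $\mathfrak{c}$-generated and contained in $H_e(G)\cup\{0\}$ transcribes the argument in Theorem \ref{H_e(G)densely strongly c-algebrable}: density follows from $f_N\to g_N$ on compacta together with density of $\{g_N\}$; and for any nonzero $P\in\mathcal{P}_{p,0}$ and any $\alpha_1>\cdots>\alpha_p\ge 1$ the dominant monomial of $P$ forces $|P(f_{\alpha_1}(a_n),\dots,f_{\alpha_p}(a_n))|\to+\infty$, which simultaneously prevents the combination from vanishing identically (so $\mathcal{A}$ is freely $\mathfrak{c}$-generated) and rules out any holomorphic extension to an open ball meeting $\partial G$, since such an extension would be bounded on the ball while having to take the unbounded values at infinitely many $a_{n_k}$'s inside the ball (so $\mathcal{A}\setminus\{0\}\subset H_e(G)$). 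I expect the combined approximation--interpolation theorem to be the main obstacle, because Gauthier--Hengartner admits no direct several-variable analogue and a substitute requires genuine Stein-manifold technology; once it is available the remainder of the proof is bookkeeping from the $N=1$ case.
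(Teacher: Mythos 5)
Your route is genuinely different from the paper's --- and much heavier. The paper deduces both conclusions from results already on record: for maximal dense-lineability it invokes Valdivia's theorem (quoted after Theorem \ref{AronGarciaMaestre}) that $H_e(G)\cup\{0\}$ contains a \emph{dense nearly-Baire} subspace for every domain of existence $G\subset\C^N$, and then repeats the argument of Theorem \ref{H_e(G)Ainfty maximal dense-lineable} showing that an infinite dimensional nearly-Baire space cannot have dimension $\aleph_0$, hence has dimension $\mathfrak{c}$ under CH; for $\mathfrak{c}$-algebrability it takes the closed infinitely generated algebra ${\cal A}\subset H_e(G)\cup\{0\}$ of Theorem \ref{AronGarciaMaestre} and observes that, being a closed infinite dimensional subspace of the separable F-space $H(G)$, it has dimension $\mathfrak{c}$ by Baire category and therefore cannot be countably generated. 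Your plan --- transplanting the explicit construction of Theorem \ref{H_e(G)densely strongly c-algebrable} to $\C^N$ --- would, if completed, buy a strictly stronger conclusion (dense \emph{strong} $\mathfrak{c}$-algebrability, and without CH), so it is worth pursuing; but as written it is not yet a proof.

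Two steps need real work. (i) The combined approximation--interpolation lemma is, as you acknowledge, the crux, and your sketch is an assertion rather than an argument; it can be established (for instance, write the correction term as $\sum_i \sigma_i h_i$, where the $\sigma_i$ are global sections of the ideal sheaf of the discrete set $\{a_n:\,a_n\notin K_N\}$ generating it on a neighborhood of the $H(G)$-convex compact $K_N$, solve $\sum_i\sigma_i\widetilde h_i=-v$ near $K_N$ via Cartan's Theorems A and B, and approximate the $\widetilde h_i$ by Oka--Weil), but some such argument must actually be supplied. (ii) The final step of the one-variable proof rests on the characterization $F\in H_e(G)\Leftrightarrow\rho(F,a)={\rm dist}(a,\partial G)$ for all $a\in G$, which is genuinely one-dimensional. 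For $N\ge 2$ you must return to the definition of $H_e(G)$ via triples $(G_1,G_2,\widetilde F)$ and prove the reduction: any such extension produces a point $a\in G$, a radius $r>{\rm dist}(a,\partial G)$ and a function bounded on $B(a,r)$ agreeing with $F$ on the connected component of $B(a,r)\cap G$ containing $a$ (follow a path in $G_1$ from $G_2$ toward a point of $G_1\setminus G$ and apply the identity principle on the component of $G\cap G_1$ meeting $G_2$). Your phrase ``rules out any holomorphic extension to an open ball meeting $\partial G$'' silently substitutes this consequence for the actual definition. The segment construction of $\{a_n\}$ does carry over verbatim to $\C^N$ and retains the required component property, so once (i) and (ii) are in place the remainder is indeed bookkeeping.
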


\begin{proof}
The maximal dense-lineability of $H_e(G)$ can be shown exactly as in the proof of Theorem \ref{H_e(G)Ainfty maximal dense-lineable},
by using the existence of a dense nearly-Baire subspace (see the paragraph following Theorem \ref{AronGarciaMaestre}).
As for $\mathfrak{c}$-algebrability, suppose, by way of contradiction, that any algebra in $H_e(G) \cup \{0\}$ is coun\-ta\-bly
generated. In particular, the closed algebra $\cal A$ given in the conclusion of Theorem \ref{AronGarciaMaestre}
contains a countable set $\{f_n: \, n=1,2,...\}$ such that eve\-ry $f \in {\cal A}$
is a linear combination of products of the form $f_{i_1}^{m_1} \cdots f_{i_p}^{m_p}$ with $m_1, \dots ,m_p \in \N$ ($p \in \N$).
Observe that there are countably many such products.
But ${\cal A}$ was infinite dimensional when considered as a vector space. Since ${\cal A}$ is closed in the F-space $H(G)$,
the space ${\cal A}$ is a separable infinite dimensional F-space. A standard application of Baire's category theorem yields \,${\rm dim}\,({\cal A}) = \mathfrak{c}$, which contradicts the fact that $\cal A$ is a countably ge\-ne\-ra\-ted vector space.
The proof is finished.
\end{proof}

\noindent {\bf Acknowledgements.} The authors have been partially supported by the Plan
Andaluz de Investigaci\'on de la Junta de Andaluc\'{\i}a FQM-127
Grant P08-FQM-03543 and by MEC Grant MTM2012-34847-C02-01.

\bigskip

{\footnotesize  

} 

\end{document}